\documentclass[12pt,a4paper,oneside]{amsart}
\pdfoutput=1
\usepackage{amssymb}
\usepackage{esint}
\usepackage{mathtools}
\usepackage{hyperref}
\mathtoolsset{showonlyrefs}
\usepackage[english]{babel}
\usepackage[latin1]{inputenc}
\usepackage[pdftex]{graphicx}
\theoremstyle{plain}
\newtheorem{theorem}{Theorem}
\newtheorem{lemma}[theorem]{Lemma}
\newtheorem{corollary}[theorem]{Corollary}
\newtheorem{proposition}[theorem]{Proposition}

\theoremstyle{definition}

\theoremstyle{remark}
\newtheorem{remark}[theorem]{Remark}
\newcommand{\R}{\mathbb R}
\newcommand{\C}{\mathbb C}
\newcommand{\N}{\mathbb N}
\newcommand{\Nb}{\bar{\mathbb N}}

\newcommand{\h}{\mathcal H}

\newcommand{\brt}[1]{{\mathcal G_{#1}}}

\newcommand{\der}{\mathrm{d}}
\newcommand{\eps}{\varepsilon}
\renewcommand{\phi}{\varphi}
\newcommand{\abs}[1]{\left| #1 \right|}

\newcommand{\sisus}{\operatorname{int}}

\renewcommand{\theta}{\vartheta}

\newcommand{\Order}{\mathcal O}
\newcommand{\order}{o}

\newcommand{\lap}{\mathcal L}
\newcommand{\sff}[2]{\mathrm{I\!I}\!\left(#1,#2\right)}
\newcommand{\tpm}[1]{\sff{#1}{#1}}
\newcommand{\ip}[2]{\left\langle#1,#2\right\rangle}
\newcommand{\bd}[1]{{\check{#1}}}
\newcommand{\dbd}[1]{\dot{\bd{#1}}}
\newcommand{\ddbd}[1]{\ddot{\bd{#1}}}
\newcommand{\Der}[1]{\frac{\der}{\der #1}}
\newcommand{\set}[1]{\underline{#1}}
\newcommand{\E}{{\mathcal E}}
\title{Boundary reconstruction for the broken ray transform}
\author{Joonas Ilmavirta}
\address{Department of Mathematics and Statistics, University of Jyv\"askyl\"a, P.O. Box 35 (MaD) FI-40014 University of Jyv\"askyl\"a, Finland}
\email{joonas.ilmavirta@jyu.fi}
\date{\today}

\begin{document}

\begin{abstract}
We reduce boundary determination of an unknown function and its normal derivatives from the (possibly weighted and attenuated) broken ray data to the injectivity of certain geodesic ray transforms on the boundary.
For determination of the values of the function itself we obtain the usual geodesic ray transform, but for derivatives this transform has to be weighted by powers of the second fundamental form.
The problem studied here is related to Calder\'on's problem with partial data.
\end{abstract}

\subjclass[2010]{
Primary
53C65, 
78A05; 
Secondary
35R30, 
58J32
}

\keywords{Broken ray transform, X-ray transform, Calder\'on's problem, inverse problems}

\maketitle

\section{Introduction}

For a compact Riemannian manifold~$M$ with boundary, we study the problem of recovering an unknown function $f:M\to\R$ and its derivatives at the boundary from its broken ray transform.
We turn this boundary determination problem to injectivity of weighted geodesic ray transforms (X-ray transforms) on the boundary manifold~$\partial M$.
If these transforms are injective and~$f$ is smooth enough, one may recover the Taylor series of~$f$ at any point on~$\partial M$.
This result is stated in theorem~\ref{thm:f-rec} and corollary~\ref{cor:f-rec}.

A broken ray is a curve in~$M$ which is geodesic in $\sisus M$ and reflects by the usual reflection law (angle of incidence equals angle of reflection) at~$\partial M$.
We fix a set $E\subset\partial M$, called the set of tomography, where measurements can be done.
The broken ray transform of a function $f:M\to\R$ is a function that takes a broken ray with both endpoints in~$E$ into the integral of~$f$ along the broken ray.
We allow a broken ray to reflect also on~$E$ if that is convenient.
Weight and attenuation may be included in the broken ray transform.
The broken ray transform in this setting has been studied in~\cite{KS:calderon,I:disk,H:square,I:refl}.

The results presented here are based on the observation that broken rays in a suitably convex region of the boundary can tend to a geodesic on the boundary.
This observation allows reconstruction of the boundary values under favorable circumstances, though recovery of normal derivatives requires more careful analysis of the details of this convergence.
This phenomenon is well known in the study of billiards, and billiard trajectories (broken rays) near or at the boundary are known as `glancing billiards'~\cite{M:ergodic-dynamical}, `creeping rays'~\cite{Z:spectral-survey}, `whispering gallery trajectories'~\cite{Z:billiard-boundary}, or `gliding rays'~\cite{AM:gliding-rays}.

\subsection{Main results}

These results assume that the broken ray transform of $f\in C^\infty(M)$ with respect to a given set of tomography $E\subset\partial M$ is known.

Our main result is theorem~\ref{thm:f-rec} and it states the following:
Suppose~$\sigma$ is a geodesic on~$\partial M$ with endpoints in $\sisus E$ along which~$\partial M$ is strictly convex.
Then the integral of~$f$ along~$\sigma$ may be constructed.
Furthermore, if the normal derivatives of~$f$ of orders $0,1,\dots,k-1$ are known in a neighborhood of~$\sigma$, one may recover the integral of~$\partial_\nu^kf$ along~$\sigma$ weighted by the second fundamental form to the power~$-k/3$.

Corollary~\ref{cor:f-rec} follows easily:
Suppose~$\partial M$ is strictly convex.
If the geodesic ray transform on $\partial M\setminus E$ weighted with any power~$-k/3$, $k\in\N$, is injective, then one may recover the Taylor polynomial of~$f$ at every boundary point.

The theorem with its corollary remains true if one introduces a weight and attenuation in the broken ray transform.
These results are stated in more detail in section~\ref{sec:thm}.

\subsection{Earlier and related results}

Eskin~\cite{eskin} reduced a partial data problem for the electromagnetic Schr\"odinger operator to injectivity of the broken ray transform and proved this injectivity under some geometrical conditions.
Kenig and Salo~\cite{KS:calderon} recently showed that partial data for Calder\'on's problem (as reduced to the Schr\"odinger equation) is enough to reconstruct conductivity in a certain tubular manifold if the broken ray transform is injective on the transversal manifold.
The result~\cite[Theorem~2.4]{KS:calderon} motivates the study of the broken ray transform for general Riemannian manifolds.


The broken ray transform has gained interest recently, and injectivity results have been given in the disc~\cite{I:disk} (partial result), square~\cite{H:square}, and conical sets~\cite{I:refl}.
The aim of the present paper is to provide tools for reconstructing the boundary values of a function from its broken ray transform.
The reason for focusing attention to the boundary is twofold: first, the problem reduces to ray transforms without reflections, which are easier to analyze, and second, boundary reconstruction can be a useful first step in interior determination.

Boundary determination results for a number of inverse problems have been obtained, and they have been used for interior determination.
In particular, for Calder\'on's problem, boundary determination (see e.g.~\cite{KV:bdy-det-calderon,SU:calderon,B:calderon-boundary,NT:calderon-boundary,SZ:p-calderon}) allows one to convert the conductivity equation into a Schr\"odinger equation, for which the problem is easier to study.
For a review of Calder\'on's problem we refer to~\cite{U:eit-calderon}.

For boundary determination of a metric from the boundary distance function, see~\cite{Z:bdy-det-jet} and references therein.
Boundary rigidity is related to ray transforms via linearization.

There are also boundary determination results for ray transforms.
Recovering the boundary value of a scalar function from its ray transform on a strictly convex manifold is trivial, but recovering tensors or derivatives requires more work.
As an example of a boundary determination result we mention~\cite[Lemma~2.1]{S:int-geom-nonconvex}, which is used as a step towards interior determination.
This boundary determination result was later generalized by Stefanov and Uhlmann~\cite{SU:bdy-rig-simple}.

Whereas a boundary determination result only gives the unknown function and possibly its derivatives at the boundary, a local support theorem gives it in a neighborhood of the boundary.
Such a local support theorem in the Euclidean case follows from Helgason's global support theorem~\cite[Theorem~2.6]{book-helgason}, and there are also more recent results on manifolds~\cite{K:spt-thm,UV:local-x-ray}.
Boundary recovery for ray transforms at the accessible part of the boundary is rather different in nature from the present problem.
In general, boundary recovery results are local, but the reconstruction scheme presented here is not local on the reflecting part of the boundary.
Our method is local only in the sense that one only needs to consider broken rays in an arbitrary neighborhood of the boundary.

The ray transform problem to which which we transform the boundary reconstruction problem includes weights.
Attenuated ray transforms have been studied extensively, but the case with general weights has received less attention -- possibly because it does not arise as often in other problems.
There are, however, some recent results in this direction~\cite{FSU:general-x-ray,SUV:partial-bdy-rig}.
There are also counterexamples in the Euclidean plane given by Boman~\cite{B:weighted-radon-noninjective}.

\subsection{Outline}

We organize our paper as follows.
In section~\ref{sec:ass} we present notation and assumptions that we will use throughout.
We present our main result and some immediate corollaries in section~\ref{sec:thm}.
We prove the necessary geometrical lemmas in section~\ref{sec:lma} and use them to prove the main result in section~\ref{sec:pf}.
Finally we consider the example of boundary determination for surfaces in section~\ref{sec:1d}.

\section{Notation and precise statement of results}

\subsection{Notation and assumptions}
\label{sec:ass}

The assumptions given in this section are assumed to hold throughout the subsequent discussion without mention.

The setting of the problem involves a compact Riemannian manifold~$M$ with boundary~$\partial M$ and a~$C^3$ metric~$g$, and \emph{the set of tomography}~$E\subset\partial M$.
We do not assume that~$E$ is open.
We write $m=\dim M$ and assume~$m\geq2$.

We assume that the boundary is~$C^3$; this ensures that the (local) change to boundary normal coordinates is a~$C^2$ diffeomorphism.
Assuming that the boundary is~$C^3$ will be important -- see remark~\ref{rmk:C2} for details.


A broken ray is a geodesic in~$M$ which reflects on~$\partial M$ according to the usual reflection law: the angle of incidence equals the angle of reflection.
All broken rays are assumed to have finite length and unit speed.

We assume the attenuation coefficient~$a$ and weight~$w$ to be known continuous functions on~$TM$.
For brevity, we denote
\begin{equation}
\label{eq:W-def}
W_\alpha(t)=w(\alpha(t),\dot{\alpha}(t))\exp\left(\int_0^t a(\alpha(s),\dot{\alpha}(s))\der s\right)
\end{equation}
for any piecewise~$C^1$ curve $\alpha:[0,L]\to M$.
(The integral is evaluated piecewise if~$\alpha$ is not~$C^1$.)
If there is no weight or attenuation, then $a\equiv0$, $w\equiv1$ and $W\equiv1$.

We denote by $\Gamma_E$ the set of broken rays with both endpoints on~$E$.
We define the broken ray transform of a continuous function~$f:M\to\R$ as $\brt{W}f:\Gamma_E\to\R$ by letting
\begin{equation}
\label{eq:brt-def}
\brt{W}f(\gamma)=\int_0^LW_\gamma(t)f(\gamma(t))\der t
\end{equation}
for $\gamma:[0,L]\to M$.

We use boundary normal coordinates near~$\partial M$; they can be used up to some distance~$h>0$ from the boundary.
The normal direction corresponds to the zeroth coordinate and the inward unit normal vector is therefore $\nu^\alpha=\delta^\alpha_0$ in these coordinates, where~$\delta$ is the Kronecker symbol.
Greek indices $\alpha,\beta,\dots$ take values $0,1,2,\dots$ and Latin indices $i,j,\dots$ take values $1,2,\dots$ as is common in the theory of relativity.
We use the Einstein summation convention.

For a point~$x$ in $B(\partial M,h)=\{y\in M;d(y,\partial M)<h\}$ we define its projection to~$\partial M$ by setting the normal coordinate to zero: $\bd{x}=(0,x^1,x^2,\dots)$.
We write coordinates as~$x=(x^0,\bd{x})$.

We write~$\bd{\Gamma}$ and~$\bd{\nabla}$ for the Christoffel symbol and the covariant derivative on the submanifold~$\partial M$.
We occasionally write a boundary object (such as~$\bd{\Gamma}$) with Greek indices by extending it by zero: for example, $\bd{\Gamma}^{i}_{\phantom{i}\alpha\beta}=0$ whenever $\alpha=0$ or~$\beta=0$.

We denote derivatives with respect to coordinates by $T_{,\alpha}\coloneqq \partial_\alpha T$ for any function~$T$ on~$M$.
In case of iterated derivatives we do not repeat the comma: $T_{,\alpha\beta}\coloneqq\partial_\beta\partial_\alpha T$.

We define the (scalar) second fundamental form in $B(\partial M,h)$ by
\begin{equation}
\label{eq:sff-def}
\sff{a}{b}=-\frac{1}{2}g_{ij,0}a^ib^j
\end{equation}
for $a,b\in T(B(\partial M,h))$.
This coincides with the usual definition at~$\partial M$ and extends it to the neighborhood $B(\partial M,h)$ in a coordinate independent way.
The splitting of a tangent space if a point in $B(\partial M,h)$ to the tangential and normal spaces of the shortest geodesic to the boundary is independent of the choice of coordinates.
We remark that the normal components~$a^0$ and~$b^0$ do not appear in the second fundamental form $\sff{a}{b}$ even outside~$\partial M$.

We mention the following useful formulas, which are easy to verify:
\begin{equation}
\label{eq:G-facts}
\begin{split}
\Gamma^0_{\phantom{0}ij}&=-\frac{1}{2}g_{ij,0},\\
\Gamma^i_{\phantom{i}00}&=0,\\
\Gamma^i_{\phantom{i}j0}&=\frac{1}{2}g^{ik}g_{kj,0},\\
\Gamma^i_{\phantom{i}jk}&=\bd{\Gamma}^i_{\phantom{i}jk}.
\end{split}
\end{equation}
The last identity is only meaningful on $\partial M$, while the others hold in all of $B(\partial M,h)$.
It follows from the definition of the boundary normal coordinates that $g_{0i}=0$ and~$g_{00}=1$.

For a broken ray~$\gamma$ in $B(\partial M,h)$ we define its \emph{energy} as
\begin{equation}
\E(t)=\frac{1}{2}\dot{\gamma}^0(t)^2+\gamma^0(t)\tpm{\dot{\gamma}(t)}.
\end{equation}
The energy~$\E$ (as well as $\tpm{\dot{\gamma}(t)}$) is well defined and continuous even on points of reflection, since~$\dot{\bd{\gamma}}$ and~$(\dot{\gamma}^0)^2$ are continuous.
As it turns out, $\tpm{\dot{\gamma}(t)}^{-2/3}\E(t)$ has a nicer limit as the broken ray approaches the boundary (see lemma~\ref{lma:rho-conv} and equation~\eqref{eq:rhok}), but we find the energy~$\E$ more convenient to work with.

Let $\sigma:[0,L]\to\partial M$ be a geodesic of finite length on the manifold~$\partial M$. We say that~$\sigma$ is \emph{admissible} if
\begin{itemize}
\item $\sigma(0)\in\bar{E}$ and $\sigma(L)\in\overline{\sisus E}$ (or vice versa) and
\item $\tpm{\dot{\sigma}(t)}>0$ for all $t\in[0,L]$.
\end{itemize}
The geometrical meaning of the second condition is that~$\partial M$ is strictly convex along~$\sigma$.
We will assume that $\sigma(0)\in\bar{E}$ and $\sigma(L)\in\overline{\sisus E}$; the corresponding results for the opposite situation are trivial generalizations of the ones we present.
As we shall show in lemma~\ref{lma:unif-conv}, an admissible boundary geodesic is a~$C^1$ uniform limit of broken rays with endpoints in~$E$.

Given an admissible geodesic~$\sigma$, we construct a sequence of broken rays~$\gamma_n$ as follows.
A broken ray~$\gamma$ is uniquely determined by its initial point and direction; we continue the broken ray for time~$L$.
We let $(\gamma_n(0),\dot{\gamma}_n(0))\to(\sigma(0),\dot{\sigma}(0))$, $\dot{\gamma}^0_n(0)>0$, and $\gamma_n(0)\in E$.
In this limit $\dot{\gamma}_n^0(0)\to0$ as $n\to\infty$.
We denote the energy of~$\gamma_n$ by~$\E_n$; we have assumed thus that $\E_n(0)\to0$.
Any sequence of broken rays satisfying these assumptions converges to the boundary geodesic~$\sigma$ as shown in lemma~\ref{lma:unif-conv} below.

We will denote by~$\gamma$ any broken ray sufficiently closee to a given admissible boundary geodesic~$\sigma$ and by~$\E$ the corresponding energy.
When we refer to the broken rays~$\gamma_n$ in the sequence constructed above, we denote the energy by~$\E_n$.

The constants implied by the notations~$\Order$ and~$\order$ are uniform (for a fixed~$\sigma$) and the estimates hold in the limit $n\to\infty$ without mention.
When otherwise stating that a constant is uniform, we mean that it is independent of~$n$ and~$t\in[0,L]$.

We use the notations $\N=\{0,1,\dots\}$, $\Nb=\N\cup\{\infty\}$, and $\set{k}=\{i\in\N:i\leq k\}$ for $k\in\Nb\cup\{-1\}$.

The normal derivative $\partial_\nu f$ of a function $f:M\to\R$ is well defined in the neighborhood $B(\partial M,h)$ if the limit of the difference quotient exists in the boundary normal coordinates.
We define
\begin{equation}
C_\nu^k=\{f:M\to\R;\partial_\nu^if\text{ is continuous in $B(\partial M,h)$ for all }i\in\set{k}\}
\end{equation}
and $C_\nu^\infty=\bigcap_{k=0}^\infty C_\nu^k$.

We will occasionally omit the time argument in various functions depending on~$t$.

\subsection{Statement of results}
\label{sec:thm}

These results are stated in the notations and assumptions of section~\ref{sec:ass}.
Unless otherwise mentioned, $\gamma$ is a broken ray in $B(\partial M,h)$.

\begin{theorem}
\label{thm:f-rec}
Let~$(M,g)$ be a Riemannian manifold with boundary satisfying the assumptions of section~\ref{sec:ass} and define the weight~$W_\sigma$ by~\eqref{eq:W-def}.
Let $k\in\N$.
For any admissible geodesic $\sigma:[0,L]\to\partial M$ and $f\in C_\nu^k$ one can reconstruct the integral
\begin{equation}
\label{eq:kdf-int}
\int_0^LW_\sigma(t)\tpm{\dot{\sigma}(t)}^{-k/3}\partial_\nu^kf(\sigma(t))\der t
\end{equation}
from the broken ray transform~$\brt{W}f$ (on the whole~$\Gamma_E$) and the knowledge of~$\partial_\nu^if$ for all $i\in\set{k-1}$ in a neighborhood of~$\sigma$ on~$\partial M$.
\end{theorem}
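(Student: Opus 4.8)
The plan is to prove the theorem by approaching the admissible geodesic~$\sigma$ by the sequence of broken rays~$\gamma_n\colon[0,L_n]\to B(\partial M,h)$ constructed in section~\ref{sec:ass}, Taylor expanding~$f$ in the normal direction along each~$\gamma_n$, subtracting off the contributions of the lower order normal derivatives~$\partial_\nu^if$, $i\in\set{k-1}$ (which are prescribed near~$\sigma$), and recovering~\eqref{eq:kdf-int} as a limit of the resulting computable quantities after rescaling by the appropriate power of the energy~$\E_n$.

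\emph{First}, I would set up the expansion. Since $\gamma_n(0)\in E\subset\partial M$ and $\dot\gamma_n^0(0)>0$, the energy $\E_n(0)=\tfrac12\dot\gamma_n^0(0)^2$ is a known number tending to~$0$; by lemma~\ref{lma:unif-conv} one has $\gamma_n\to\sigma$ in~$C^1$ and $L_n\to L$, hence $\sup_t\gamma_n^0(t)\to0$ and the projections~$\bd{\gamma}_n$ converge to~$\sigma$ in~$C^1$ and eventually lie in the neighbourhood of~$\sigma$ where~$\partial_\nu^if$, $i\in\set{k-1}$, are prescribed. As $f\in C_\nu^k$, the derivative~$\partial_\nu^kf$ is continuous, hence uniformly continuous near~$\sigma$, and Taylor's theorem in the zeroth coordinate gives, uniformly in~$t$,
\begin{equation}
f(\gamma_n(t))=\sum_{i=0}^{k}\frac{\gamma_n^0(t)^i}{i!}\,\partial_\nu^if(\bd{\gamma}_n(t))+R_n(t),\qquad \abs{R_n(t)}=\order\bigl(\gamma_n^0(t)^k\bigr).
\end{equation}
Multiplying by the known weight~$W_{\gamma_n}(t)$ and integrating over~$[0,L_n]$, each term with $i\in\set{k-1}$ becomes an explicitly computable number~$D_{i,n}$, since it involves only the prescribed~$\partial_\nu^if$, the known weight, and the known trajectory~$\gamma_n^0$. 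Therefore
\begin{equation}
\label{eq:sketch-subtract}
\frac1{k!}\int_0^{L_n}W_{\gamma_n}(t)\,\gamma_n^0(t)^k\,\partial_\nu^kf(\bd{\gamma}_n(t))\,\der t=\brt{W}f(\gamma_n)-\sum_{i=0}^{k-1}D_{i,n}+\order\!\left(\int_0^{L_n}W_{\gamma_n}(t)\,\gamma_n^0(t)^k\,\der t\right).
\end{equation}
For $k=0$ the sum is empty and it remains only to pass to the limit; the argument below covers this case as well.

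\emph{Next} comes the geometric heart of the proof: determining the size, and the weak limit, of integrals of the form $\int_0^{L_n}W_{\gamma_n}(t)\,\gamma_n^0(t)^k\,p(\bd{\gamma}_n(t))\,\der t$ for~$p$ continuous near~$\sigma$. Between two consecutive reflections the geodesic equation together with~\eqref{eq:G-facts} gives the identity $\ddot\gamma_n^0=-\tpm{\dot\gamma_n}$; since over one excursion (of duration~$\asymp\sqrt{\E_n}$) the factors~$\dot\gamma_n$ and~$\bd{\gamma}_n$ move only by~$\Order(\sqrt{\E_n})$, the functions~$W_{\gamma_n}$, $p(\bd{\gamma}_n)$ and~$\tpm{\dot\gamma_n}$ are essentially constant there and~$\gamma_n^0$ performs a near-parabolic arc of height~$\asymp\E_n/\tpm{\dot\gamma_n}$, namely $\gamma_n^0(s)\approx\tfrac12\tpm{\dot\gamma_n}\,s(T-s)$ on the arc, of length~$T\asymp\sqrt{\E_n}$, measured from a reflection. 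Averaging~$(\gamma_n^0)^k$ over one such arc produces the Beta integral $\int_0^1 s^k(1-s)^k\,\der s$ and hence a factor $c_k(\E_n/\tpm{\dot\gamma_n})^k$ with an explicit positive constant~$c_k=4^k\int_0^1 s^k(1-s)^k\,\der s$; writing~$\E_n$ as~$\tpm{\dot\gamma_n}^{2/3}$ times the quantity $\rho_n\coloneqq\tpm{\dot\gamma_n}^{-2/3}\E_n$, which by lemma~\ref{lma:rho-conv} is almost conserved along~$\gamma_n$, this becomes $c_k\,\rho_n^k\,\tpm{\dot\gamma_n}^{2k/3-k}$, and here $2k/3-k=-k/3$ is exactly the power appearing in~\eqref{eq:kdf-int}. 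Summing over the~$\asymp L/\sqrt{\E_n}\to\infty$ arcs and recognising a Riemann sum (the slow factors being continuous, $\gamma_n\to\sigma$, $L_n\to L$, and $\rho_n(t)=\rho_n(0)(1+\order(1))$ uniformly) yields
\begin{equation}
\label{eq:sketch-avg}
\int_0^{L_n}W_{\gamma_n}(t)\,\gamma_n^0(t)^k\,p(\bd{\gamma}_n(t))\,\der t=\bigl(c_k\,\rho_n(0)^k+\order(\rho_n(0)^k)\bigr)\int_0^{L}W_{\sigma}(t)\,\tpm{\dot\sigma(t)}^{-k/3}\,p(\sigma(t))\,\der t,
\end{equation}
where $\rho_n(0)=\tpm{\dot\sigma(0)}^{-2/3}\E_n(0)$ is known.

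\emph{Finally}, applying~\eqref{eq:sketch-avg} with $p=\partial_\nu^kf$ — and noting that the error term in~\eqref{eq:sketch-subtract} is~$\order(\rho_n(0)^k)$ by the case $p\equiv1$ of~\eqref{eq:sketch-avg} — we obtain
\begin{equation}
\int_0^{L}W_{\sigma}(t)\,\tpm{\dot\sigma(t)}^{-k/3}\,\partial_\nu^kf(\sigma(t))\,\der t=\lim_{n\to\infty}\frac{k!}{c_k\,\rho_n(0)^k}\left(\brt{W}f(\gamma_n)-\sum_{i=0}^{k-1}D_{i,n}\right),
\end{equation}
where every quantity on the right is computable from~$\brt{W}f$, the prescribed~$\partial_\nu^if$ ($i\in\set{k-1}$), and the known metric and weight. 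This reconstructs~\eqref{eq:kdf-int}. I expect the main obstacle to be the analysis behind~\eqref{eq:sketch-avg}: one must control the near-parabolic arcs of~$\gamma_n^0$, the slow drift of~$\E_n$ and of~$\bd{\gamma}_n$ over the whole interval~$[0,L_n]$, and the error committed in replacing the sum over arcs by an integral — precisely what the geometric lemmas of section~\ref{sec:lma} are meant to supply, and which I would establish there before carrying out the steps above.
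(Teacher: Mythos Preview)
Your proposal is correct and follows the same strategy as the paper: approximate~$\sigma$ by the broken rays~$\gamma_n$, subtract the contribution of the known Taylor terms of order~$<k$, and rescale by~$\E_n(0)^{-k}$, using the arc-by-arc parabolic approximation of~$\gamma_n^0$ together with lemmas~\ref{lma:unif-conv}, \ref{lma:hoptime} and~\ref{lma:rho-conv} to pass to the limit. One point worth noting: your Beta-integral computation gives the averaging constant $c_k=4^k\int_0^1 s^k(1-s)^k\,\der s$, whereas the paper's lemma~\ref{lma:heur-glob} obtains~$(2/3)^k$ by ``iterating'' the $k=1$ case---these agree only for $k\le1$ (for $k=2$ one has $8/15\neq4/9$, since the average of~$(\gamma^0)^2$ over an arc exceeds the square of the average), but as both constants are explicit and nonzero the reconstruction claim of the theorem is unaffected either way; you should also not forget to note, as the paper does, that $\gamma_n(L_n)\in E$ for large~$n$ so that $\brt{W}f(\gamma_n|_{[0,L_n]})$ is indeed part of the given data.
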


In the case~$k=0$ the theorem simply states that one can reconstruct the weighted integral of~$f$ over any admissible~$\sigma$. 
We also have the following immediate corollary.

\begin{corollary}
\label{cor:f-rec}
Let $k\in\Nb$ and $f\in C_\nu^k$.
Suppose the data set
\begin{equation}
\label{eq:Fk-data}
\left\{\left(\sigma,\int_0^LW_\sigma(t)\tpm{\dot{\sigma}(t)}^{-i/3}F(\sigma(t))\der t\right);\sigma:[0,L]\to\partial M\text{ admissible}\right\}
\end{equation}
determines $F\in C(\partial M,\R)$ uniquely for every~$i\in\set{k}$.
Then one can recover $\partial_\nu^if|_{\partial M}$ for every $i\in\set{k}$ from~$\brt{W}f$.
\end{corollary}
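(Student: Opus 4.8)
The plan is to run an induction on the order $i$ of the normal derivative, using theorem~\ref{thm:f-rec} at the $i$-th stage to manufacture the data set~\eqref{eq:Fk-data} for $F=\partial_\nu^if|_{\partial M}$ and then invoking the assumed injectivity to read off $\partial_\nu^if$ on all of~$\partial M$. Since $f\in C_\nu^k\subset C_\nu^i$ for every $i\in\set{k}$ (and for every finite $i$ when $k=\infty$), theorem~\ref{thm:f-rec} is applicable at each stage.

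For the base step $i=0$ I would apply theorem~\ref{thm:f-rec} with $k=0$: for every admissible $\sigma$ it reconstructs $\int_0^LW_\sigma(t)\tpm{\dot{\sigma}(t)}^{-0/3}f(\sigma(t))\der t$ from $\brt{W}f$ alone, because $\set{-1}=\emptyset$ and so no normal derivatives enter. Collecting these integrals over all admissible~$\sigma$ produces precisely the data set~\eqref{eq:Fk-data} with $i=0$ and $F=f|_{\partial M}$, and the uniqueness hypothesis then pins down $f|_{\partial M}\in C(\partial M,\R)$.

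For the inductive step, fix $i$ with $1\le i$ and $i\in\set{k}$, and assume $\partial_\nu^jf|_{\partial M}$ has already been recovered in $C(\partial M,\R)$ for every $j\in\set{i-1}$. Theorem~\ref{thm:f-rec} with $k$ replaced by~$i$, applied to an arbitrary admissible~$\sigma$, requires the knowledge of $\partial_\nu^jf$, $j\in\set{i-1}$, in a neighborhood of~$\sigma$ on~$\partial M$; this is available from the induction hypothesis, since the recovered functions are known everywhere on~$\partial M$. The theorem then delivers $\int_0^LW_\sigma(t)\tpm{\dot{\sigma}(t)}^{-i/3}\partial_\nu^if(\sigma(t))\der t$ for every admissible~$\sigma$, which is the data set~\eqref{eq:Fk-data} for this~$i$ with $F=\partial_\nu^if|_{\partial M}$. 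Applying the uniqueness hypothesis once more recovers $\partial_\nu^if|_{\partial M}$, and iterating over $i\in\set{k}$ finishes the proof; the case $k=\infty$ needs no separate treatment because each finite order only uses finitely many earlier ones.

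The corollary is in essence just bookkeeping on top of theorem~\ref{thm:f-rec}, so I do not expect a genuine obstacle. The one point to keep straight is the interface between the two statements: theorem~\ref{thm:f-rec} at order~$i$ only asks for the lower-order normal derivatives near the single geodesic~$\sigma$ under consideration, whereas the injectivity hypothesis returns each $\partial_\nu^jf|_{\partial M}$ as a function on all of~$\partial M$ --- and it is exactly this global recovery that lets the induction proceed, since at the next order the theorem must be fed to all admissible~$\sigma$ simultaneously.
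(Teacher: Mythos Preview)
Your proof is correct and follows exactly the same inductive scheme as the paper's own proof: apply theorem~\ref{thm:f-rec} at order~$i$ to produce the data~\eqref{eq:Fk-data}, invoke the injectivity hypothesis to recover $\partial_\nu^if|_{\partial M}$, and iterate. The paper's version is terser, but the content is identical.
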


\begin{proof}
By theorem~\ref{thm:f-rec} we may recover the data~\eqref{eq:Fk-data} with~$i=0$ from~$\brt{W}f$.
By assumption, this determines~$f|_{\partial M}$.
Using theorem~\ref{thm:f-rec} again, we may thus recover the data~\eqref{eq:Fk-data} with~$i=1$.
This again determines $\partial_\nu f|_{\partial M}$.
Continuing inductively up to order~$k$ verifies the claim.
\end{proof}

Emergence of the second fundamental form in the weight is due to the fact that a geodesic near the boundary is on average closer to the boundary where the second fundamental form is large.
This phenomenon is illustrated in the ellipse in figure~\ref{fig:ell}.

\begin{figure}%
\includegraphics[width=\columnwidth]{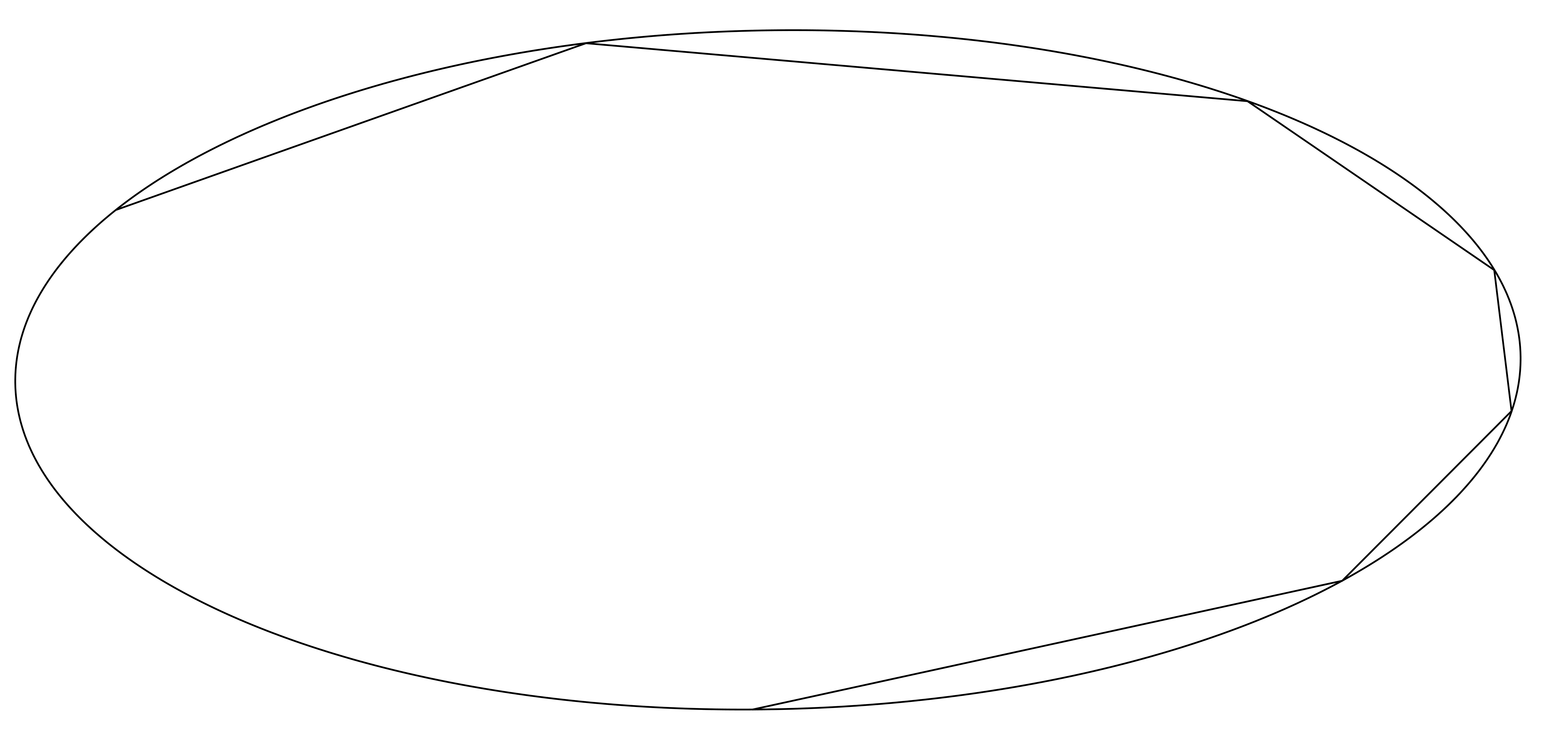}%
\caption{A broken ray near the boundary in an ellipse. The geodesic is closer to the boundary where curvature is larger. This causes the second fundamental form to appear in the weight in theorem~\ref{thm:f-rec}. See text for details.}%
\label{fig:ell}%
\end{figure}

If the data~\eqref{eq:Fk-data} is enough to reconstruct~$F$ on a suitable subset of~$\partial M$, the argument in corollary~\ref{cor:f-rec} can be used to prove reconstruction on this subset.
We present a version of this generalization as the following corollary.
This is in fact a local boundary determination result for the geodesic ray transform.
We do not claim that the result is new, but we present it because it follows easily from theorem~\ref{thm:f-rec}.

\begin{corollary}
\label{cor:f-rec-E}
Let $k\in\Nb$ and $f\in C_\nu^k$.
Suppose that for every $x\in\sisus E$ there is $v\in T_x(\partial M)$ with $\tpm{v}>0$ and $w(x,v)\neq0$.
Then one can recover $\partial_\nu^if|_{\overline{\sisus E}}$ for every $i\in\set{k}$ from~$\brt{W}f$.

In fact, it suffices to know $\brt{W}f(\gamma)$ for geodesics~$\gamma$ (without reflections) with endpoints in~$E$.
\end{corollary}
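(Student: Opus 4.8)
The plan is to prove both assertions simultaneously by induction on $k$: assuming $\partial_\nu^if|_{\overline{\sisus E}}$ has already been reconstructed for every $i\in\set{k-1}$ (nothing is assumed when $k=0$), I would reconstruct $\partial_\nu^kf|_{\overline{\sisus E}}$. Fix $x\in\sisus E$ and a unit vector $v\in T_x(\partial M)$ with $\tpm{v}>0$ and $w(x,v)\neq0$, as the hypothesis provides. Since $\sisus E$ is open in $\partial M$ and $\tpm{\cdot}$ and $w$ are continuous, there are a neighborhood $U\subset\sisus E$ of $x$ in $\partial M$ and a neighborhood of $(x,v)$ in $T(B(\partial M,h))$ on which $\tpm{\cdot}>0$ and $w$ keeps the sign of $w(x,v)$; all curves used below will have velocities in this second neighborhood.

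For the first assertion I would apply theorem~\ref{thm:f-rec} to very short geodesics through $x$. Let $\sigma_\eps\colon[0,\eps]\to\partial M$ be the unit-speed geodesic of $\partial M$ with $\sigma_\eps(0)=x$ and $\dot\sigma_\eps(0)=v$; for small $\eps$ it stays in $U\subset\sisus E$, so both endpoints lie in $\bar E\cap\overline{\sisus E}$ and $\tpm{\dot\sigma_\eps}>0$ throughout, i.e.\ $\sigma_\eps$ is admissible. Moreover $\partial_\nu^if$ is known on the open set $\sisus E\supset\sigma_\eps([0,\eps])$ for $i\in\set{k-1}$, so theorem~\ref{thm:f-rec} yields the number
\begin{equation}
J_\eps=\int_0^\eps W_{\sigma_\eps}(t)\,\tpm{\dot\sigma_\eps(t)}^{-k/3}\,\partial_\nu^kf(\sigma_\eps(t))\,\der t .
\end{equation}
The normalizing integral $K_\eps=\int_0^\eps W_{\sigma_\eps}(t)\,\tpm{\dot\sigma_\eps(t)}^{-k/3}\,\der t$ is computable from $g$, $w$, $a$ and is nonzero because its integrand keeps the sign of $w(x,v)$; hence $J_\eps/K_\eps$ is a weighted average of $t\mapsto\partial_\nu^kf(\sigma_\eps(t))$, which converges to $\partial_\nu^kf(x)$ as $\eps\to0^+$ by continuity of $\partial_\nu^kf$ on $B(\partial M,h)$. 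Since $x\in\sisus E$ was arbitrary and $\partial_\nu^kf$ is continuous, this recovers $\partial_\nu^kf|_{\overline{\sisus E}}$ and closes the induction for the first assertion.

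For the reflectionless refinement the point is that over a fixed short segment the broken rays approximating $\sigma_\eps$ still reflect arbitrarily often, so theorem~\ref{thm:f-rec} does not itself supply reflectionless data; instead I would shrink the segment to a point along a single glancing chord. Using that $\partial M$ is strictly convex near $x$ in directions near $v$ (essentially the glancing, or ``whispering gallery'', analysis behind lemma~\ref{lma:unif-conv}), I would construct, for $\delta\to0$, unbroken geodesics $\gamma_\delta\colon[0,L_\delta]\to M$ that are nearly tangent to $\partial M$ near $x$, have both endpoints on $\partial M$ as close to $x$ as desired (hence in $\sisus E\subset E$), satisfy $\gamma_\delta^0>0$ on $(0,L_\delta)$, and converge uniformly to $x$. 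Since $f\in C_\nu^k$, Taylor's theorem in the normal variable gives, for each $t$, some $\theta_t\in[0,1]$ with
\begin{equation}
f(\gamma_\delta(t))=\sum_{i=0}^{k-1}\frac{\gamma_\delta^0(t)^i}{i!}\,\partial_\nu^if(\bd{\gamma_\delta}(t))+\frac{\gamma_\delta^0(t)^k}{k!}\,\partial_\nu^kf\bigl(\theta_t\gamma_\delta^0(t),\bd{\gamma_\delta}(t)\bigr),
\end{equation}
and after integrating against $W_{\gamma_\delta}$ the terms with $i\in\set{k-1}$ are known inductively, so $\brt{W}f(\gamma_\delta)$ determines $R_\delta=\frac{1}{k!}\int_0^{L_\delta}W_{\gamma_\delta}(t)\,\gamma_\delta^0(t)^k\,\partial_\nu^kf(\theta_t\gamma_\delta^0(t),\bd{\gamma_\delta}(t))\,\der t$. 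Dividing by the nonzero constant-sign integral $S_\delta=\frac{1}{k!}\int_0^{L_\delta}W_{\gamma_\delta}(t)\,\gamma_\delta^0(t)^k\,\der t$ and letting $\delta\to0$ gives $R_\delta/S_\delta\to\partial_\nu^kf(x)$ (for $k=0$ one argues directly with $\brt{W}f(\gamma_\delta)/\int_0^{L_\delta}W_{\gamma_\delta}(t)\,\der t$), and running the induction with these chords proves the final sentence of the corollary.

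The step I expect to be the real obstacle is the construction of the chords $\gamma_\delta$ with the listed properties, in particular $\gamma_\delta^0>0$ on $(0,L_\delta)$ together with $\gamma_\delta\to x$: one must check that strict convexity near $x$ forces a nearly tangential geodesic to stay in $\sisus M$ along a short arc that returns to $\partial M$ near $x$. This is the only point where genuine geometry, rather than normalize-and-pass-to-the-limit bookkeeping, enters, and I would extract it from the estimates already behind lemma~\ref{lma:unif-conv} (morally: $\ddot\gamma_\delta^0\approx-\tpm{\dot\gamma_\delta}<0$ makes $t\mapsto\gamma_\delta^0(t)$ concave, hence unimodal and positive between its two zeros).
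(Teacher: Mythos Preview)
Your proof of the first assertion is essentially the paper's: short admissible boundary geodesics $\sigma_\eps$ through $x$ in direction $v$, theorem~\ref{thm:f-rec}, divide and let $\eps\to0$, then induct. (The paper divides by $\eps$ rather than by your $K_\eps$, but that is immaterial since $K_\eps/\eps\to w(x,v)\tpm{v}^{-k/3}\neq0$.)

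For the final sentence you take a genuinely different route. You bypass theorem~\ref{thm:f-rec} and work directly with single reflectionless chords $\gamma_\delta$, expanding $f$ to order $k$ in the normal variable and forming the ratio $R_\delta/S_\delta$. This can be made to work along the lines you sketch (the concavity of $\gamma_\delta^0$ via $\ddot\gamma_\delta^0=-\tpm{\dot\gamma_\delta}<0$ indeed gives the single-arc chord, and lemma~\ref{lma:hoptime} controls its length), but it re-derives a piece of the analysis behind theorem~\ref{thm:f-rec}. The paper's argument is much shorter and avoids all of this: since $\sigma_\eps\subset\sisus E$, the approximating broken rays $\gamma_n$ used in the proof of theorem~\ref{thm:f-rec} have \emph{all} their reflection points in $E$; each geodesic arc of $\gamma_n$ therefore has both endpoints in $E$, and
\[
\brt{W}f(\gamma_n)=\sum_{j}\exp\!\Bigl(\int_0^{t_{j-1}}a(\gamma_n,\dot\gamma_n)\Bigr)\,\brt{W}f\bigl(\gamma_n|_{[t_{j-1},t_j]}\bigr),
\]
where the prefactors are computable from the known attenuation $a$. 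Thus $\brt{W}f(\gamma_n)$ is reconstructible from the reflectionless data, and the first-assertion proof runs unchanged. Your sentence ``theorem~\ref{thm:f-rec} does not itself supply reflectionless data'' is where you miss this: the theorem does not need reflectionless data, but the data it needs can be assembled from reflectionless data because the reflections all occur in $E$. What your approach buys is independence from the full iterated-reflection machinery of section~\ref{sec:lma}; what the paper's buys is a one-line reduction with no new estimates.
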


\begin{proof}
We prove reconstruction in $\sisus E$; the rest follows from continuity.

For $x\in\sisus E$, denote by $\sigma_{x,v}^T$ the unique boundary geodesic $\sigma_{x,v}^T:[0,T]\to\partial M$ with initial position and speed~$(x,v)$.
For~$T$ small enough the geodesic lies in $\sisus E$ and is admissible.
We may now reconstruct the limit
\begin{equation}
\begin{split}
&\lim_{T\to0}\frac{1}{T}
\int_0^TW_{\sigma_{x,v}}(t)\tpm{\dot{\sigma}_{x,v}(t)}^{-i/3}\partial_\nu^{i}f(\sigma_{x,v}(t))\der t
\\&\qquad=
w(x,v)\tpm{v}^{-i/3}\partial_\nu^{i}f(x)
\end{split}
\end{equation}
and therefore also $\partial_\nu^{i}f(x)$ for $i=0$ from $\brt{W}f$ by theorem~\ref{thm:f-rec}.
For~$i>0$ we iterate the argument as in the proof of corollary~\ref{cor:f-rec}.

Let us now turn to the last claim.
The proof of theorem~\ref{thm:f-rec} only uses broken rays near~$\sigma$, whence in the present case it is enough to consider broken rays with all reflections in~$E$.
One can easily construct the broken ray transform for such broken rays from the knowledge of broken ray transform for the geodesic segments.
\end{proof}


\begin{remark}
\label{rmk:local}
The reconstruction scheme is based on broken rays very close to the boundary.
Hence it does not matter if there are obstacles, singularities or any kind of abnormalities in the manifold, as long as there is a neighborhood $B(\partial M,h)$ of the boundary where the assumptions of section~\ref{sec:ass} are met.
The parameter~$h$ can be chosen as small as desired.
\end{remark}

\begin{remark}
\label{rmk:conf}
If the data~\eqref{eq:Fk-data} with~$i=0$ determines~$F$ and the second fundamental form satisfies $\sff{a}{b}=c\ip{a}{b}$ for some nonvanishing function $c:\partial M\to\R$ (i.e. the first and second fundamental forms are conformally equivalent), then the data~\eqref{eq:Fk-data} with any~$i$ determines~$F$.
On a Euclidean sphere~$S^n$, for example, this coefficient~$c$ is a constant.
\end{remark}

As an example of remark~\ref{rmk:conf}, consider the ball $B^n\subset\R^n$ where $n\geq3$ and $E\supset\{x\in S^{n-1};x_1<\eps\}$ for some~$\eps>0$.
Now the first and second fundamental forms at the boundary are conformal and $\{x\in S^{n-1};x_1\geq\eps\}$ is simple, whence one can reconstruct the full Taylor polynomial of a smooth function at the boundary from its broken ray transform; for reconstruction on~$E$ one can apply corollary~\ref{cor:f-rec-E}.
For injectivity of the geodesic ray transform on simple manifolds we refer to~\cite{DKSU:anisotropic}.
Interior reconstruction for the broken ray transform in the Euclidean ball was shown recently~\cite{I:disk} when $n\geq2$ and $E\subset S^{n-1}$ is any open set,
but only by assuming that the unknown function is (in a suitable sense) analytic in the angular variable.

\begin{remark}
\label{rmk:tensor}
Theorem~\ref{thm:f-rec} holds true also when~$f$ is a tensor field in the case~$k=0$.
In fact, it may be any continuous function on the tangent bundle, and proof is the same as for scalars.
Recovery of derivatives and normal components of a tensorial~$f$, however, would be a nontrivial generalization.
\end{remark}

The important question remains whether the data~\eqref{eq:Fk-data} determines the continuous function~$F$.
If~$E$ is an open subset of~$\partial M$, then every boundary geodesic with endpoints in~$\bar{E}$ along which~$\partial M$ is strictly convex is admissible.
In this case the question amounts to asking whether the weighted geodesic ray transform on~$\partial M\setminus E$ (restricted to admissible geodesics) is injective.
The weight contains only attenuation generated by~$\alpha$ if~$i=0$ and~$w\equiv1$.


If not all boundary geodesics are admissible, one ends up with the geodesic ray transform with partial data.
This question was dealt with by Frigyik, Stefanov, and Uhlmann~\cite{FSU:general-x-ray}.
Their result applies in dimension two or higher when the weight is a~$C^2$ perturbation of an analytic one and suitable geometric conditions are met. 
The weights appearing in the data~\eqref{eq:Fk-data} are analytic if the metric~$g$ and the boundary~$\partial M$ are analytic.

In the case of full data the answer is more complete.
In the absence of weight or attenuation injectivity was shown by Mukhometov~\cite{M:bdy-rig-surface-rus} (English translation~\cite{M:bdy-rig-surface-eng}) for some Riemannian surfaces (including simple ones).
Injectivity has later been shown for the attenuated ray transform on simple surfaces~\cite{SU:surface} and simple manifolds of any dimension with small attenuation~\cite[Theorem~7.1]{DKSU:anisotropic}.

In dimension three or higher a local injectivity result was recently obtained~\cite{UV:local-x-ray} and subsequently generalized to the case of arbitrary positive weights~\cite[Corollary~3.2]{SUV:partial-bdy-rig}.
If the manifold admits a suitable convex foliation, these injectivity results become global.

If the dimension~$m-1$ of~$\partial M$ is three or higher, the problem is overdetermined.
If~$m=3$, the problem is formally determined.
The Radon transform of compactly supported functions in the plane is not injective for all smooth positive weights (see e.g.~\cite{B:weighted-radon-noninjective} and references therein), and one cannot expect that an arbitrarily weighted geodesic ray transform would be injective on a Riemannian surface with boundary.

The case~$m=2$ is even worse, since the ray transform in one dimension is not invertible.
However, all metrics on a one dimensional manifold are conformally equivalent, and remark~\ref{rmk:conf} is easy to use.
If the ray transform is known with all constant attenuations, boundary determination is possible, as discussed in section~\ref{sec:1d}.

Injectivity results for the geodesic ray transform on a manifold often assume convexity of some kind.
Therefore our boundary reconstruction method works best when the geometry is doubly strictly convex in the sense that~$M$ has a strictly convex boundary and also $\partial M\setminus E$ (or $\partial M\setminus E'$ for some $E'\subset E$) is strictly convex.

The assumptions of admissibility of geodesics in theorem~\ref{thm:f-rec} cannot be relaxed easily.
As the proof will demonsrate, the assumption on endpoints is necessary for the present reconstruction method.
The second condition states that the boundary is strictly convex along the boundary geodesic.
If the second fundamental form is negative along a boundary geodesic, broken rays starting close to it fail to follow it and escape into the interior of the manifold.

We only assume that the metric~$g$ on~$M$ (and thus on~$\partial M$) is~$C^3$, but we can still recover integrals of $\partial_\nu^kf$ for~$k>3$.
Although surprising, this makes sense; the geodesic normal to the boundary is well defined at any boundary point, and when restricted to this geodesic any function on the manifold becomes a function on some interval on the real line, where smoothness of any order is meaningful.

\section{Convergence to admissible geodesics}
\label{sec:lma}

\subsection{Uniform convergence}

In this section we prove the following lemma and study the necessary details of this convergence.

\begin{lemma}
\label{lma:unif-conv}
For an admissible geodesic $\sigma:[0,L]\to\partial M$ and a sequence $(\gamma_n)_{n\in\N}$ of broken rays satisfying the assumptions of section~\ref{sec:ass}, we have
\begin{equation}
\label{eq:unif-conv}
\begin{split}
&\gamma^0_n\to0,\\
&\dot{\gamma}^0_n\to0,\\
&\bd{\gamma}_n\to\sigma,\\
&\dbd{\gamma}_n\to\dot{\sigma},\text{ and}\\
&\E_n\to0
\end{split}
\end{equation}
uniformly on~$[0,L]$.
\end{lemma}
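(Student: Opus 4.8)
The plan is to analyze the broken ray equations in boundary normal coordinates and exploit that the energy $\E_n$ is an approximately conserved quantity, so that smallness of $\E_n(0)$ forces smallness of $\E_n(t)$ for all $t\in[0,L]$, which in turn controls all the geometric quantities in \eqref{eq:unif-conv}. First I would write down the geodesic equations for the components $\gamma^0_n$ and $\bd\gamma_n$ using the Christoffel symbol formulas \eqref{eq:G-facts}: the normal equation reads $\ddot\gamma^0_n=-\Gamma^0_{ij}\dot\gamma^i_n\dot\gamma^j_n=\frac12 g_{ij,0}\dot\gamma^i_n\dot\gamma^j_n=-\tpm{\dot\gamma_n}$ (up to the evaluation point), and the tangential equations are a perturbation of the boundary geodesic equation by terms involving $\gamma^0_n$ and $\dot\gamma^0_n$. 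Since the metric is $C^3$ and $M$ is compact, on $B(\partial M,h)$ all Christoffel symbols and their relevant derivatives are bounded, and $|\dot\gamma_n|\le 1$ (unit speed), so $\tpm{\dot\gamma_n}$ is bounded; hence $|\ddot\gamma^0_n|\le C$ uniformly.

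Next I would estimate the energy. Differentiating $\E_n(t)=\frac12(\dot\gamma^0_n)^2+\gamma^0_n\tpm{\dot\gamma_n}$ along a smooth geodesic segment and substituting the equations of motion, the leading terms cancel by design (this is the point of the energy), leaving $\dot\E_n=\Order(\gamma^0_n)$ times bounded factors, i.e. $|\dot\E_n|\le C|\gamma^0_n|$ on each segment; and $\E_n$ is continuous across reflections as already noted in section~\ref{sec:ass}. To close this I need to bound $\gamma^0_n$ by $\E_n$. From the definition, $\frac12(\dot\gamma^0_n)^2=\E_n-\gamma^0_n\tpm{\dot\gamma_n}$; the broken ray stays in $B(\partial M,h)$ with $\gamma^0_n\ge 0$, and reflections occur exactly when $\gamma^0_n=0$, so between consecutive reflections $\gamma^0_n$ rises from $0$ and returns to $0$, its maximum being where $\dot\gamma^0_n=0$, at which point $\gamma^0_n\tpm{\dot\gamma_n}=\E_n$. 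Using $\tpm{\dot\gamma_n}\ge c_0>0$ (which holds once $\gamma_n$ is close enough to $\sigma$, since $\tpm{\dot\sigma}>0$ on the compact interval $[0,L]$ and $\tpm{\cdot}$ depends continuously on the data), this gives $0\le\gamma^0_n(t)\le \E_n^{\max}/c_0$ where $\E_n^{\max}=\sup_{[0,L]}\E_n$. Feeding this back into $|\dot\E_n|\le C\gamma^0_n\le (C/c_0)\E_n^{\max}$ and integrating over $[0,L]$ yields $\E_n^{\max}\le \E_n(0)+(CL/c_0)\E_n^{\max}$; but this is only useful if $CL/c_0<1$, which need not hold. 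The fix is a Gr\"onwall-type argument on a chain of subintervals, or better: bound $|\dot\E_n|\le C\gamma^0_n$ and note $\gamma^0_n(t)\le \int$-type control is not circular if one instead uses $\gamma^0_n(t)^2\le (2/c_0)\E_n(t)\cdot\gamma^0_n(t)$... cleaner is to observe $\frac{\der}{\der t}(\gamma^0_n)\le \dot\gamma^0_n\le\sqrt{2\E_n}$ and combine with $|\dot\E_n|\le C\gamma^0_n$ to get a closed differential inequality for the pair $(\E_n,\gamma^0_n)$, giving $\E_n(t)\le \E_n(0)e^{C't}$ and hence $\E_n^{\max}=\Order(\E_n(0))\to0$.

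Once $\E_n\to0$ uniformly, the remaining convergences follow in order: $\gamma^0_n\to0$ uniformly from $0\le\gamma^0_n\le\E_n^{\max}/c_0$; then $(\dot\gamma^0_n)^2=2\E_n-2\gamma^0_n\tpm{\dot\gamma_n}\to0$ uniformly, so $\dot\gamma^0_n\to0$ uniformly (across reflections $|\dot\gamma^0_n|$ is continuous). For the tangential part, the equations for $\bd\gamma_n$ are $\ddbd\gamma_n^i+\bd\Gamma^i_{jk}(\bd\gamma_n)\dbd\gamma_n^j\dbd\gamma_n^k=R^i_n$ where the remainder $R^i_n$ collects terms with a factor of $\gamma^0_n$ or $\dot\gamma^0_n$ (using $\Gamma^i_{00}=0$ and that $\Gamma^i_{jk}-\bd\Gamma^i_{jk}=\Order(\gamma^0_n)$ by Taylor expansion in the normal variable, valid since $g$ is $C^3$ so $\bd\Gamma^i_{jk}$ is $C^1$), hence $R^i_n\to0$ uniformly. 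Since $(\bd\gamma_n(0),\dbd\gamma_n(0))\to(\sigma(0),\dot\sigma(0))$ by construction and $\sigma$ solves the unperturbed boundary geodesic equation, continuous dependence of ODE solutions on initial data and on the forcing term (Gr\"onwall again, over the fixed compact interval $[0,L]$, using boundedness and Lipschitz continuity of $\bd\Gamma$ which needs the $C^3$ hypothesis) gives $\bd\gamma_n\to\sigma$ and $\dbd\gamma_n\to\dot\sigma$ uniformly on $[0,L]$. The main obstacle is the first step: making the energy estimate genuinely non-circular across an arbitrarily long interval with arbitrarily many reflections, i.e. setting up the differential inequality for $(\E_n,\gamma^0_n)$ correctly and checking that reflections do not spoil it (they don't, since both $\E_n$ and $\gamma^0_n$ and $(\dot\gamma^0_n)^2$ are continuous there); everything after that is a standard Gr\"onwall/continuous-dependence argument.
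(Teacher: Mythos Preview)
Your overall strategy---energy estimate plus Gr\"onwall, then ODE continuous dependence for the tangential part---matches the paper's, and you correctly locate the crux: the lower bound $\tpm{\dot\gamma_n}\ge c_0>0$ is what closes the energy estimate, yet it only holds once $(\gamma_n,\dot\gamma_n)$ is already close to $(\sigma,\dot\sigma)$. Your preferred resolution, a closed differential inequality for the pair $(\E_n,\gamma^0_n)$ built from $\dot\gamma^0_n\le\sqrt{2\E_n}$ and $|\dot\E_n|\le C\gamma^0_n$, does not work. First, $(\dot\gamma^0_n)^2\le 2\E_n$ already presupposes $\gamma^0_n\tpm{\dot\gamma_n}\ge0$, so it cannot be used to break the circularity. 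Second, even granting both inequalities, the system does not yield $\E_n(t)\le\E_n(0)e^{C't}$: integrating gives at best $\gamma^0_n(t)\le L\sqrt{2\E_n^{\max}}$ and hence $\E_n^{\max}\le\E_n(0)+\sqrt{2}\,CL^2\sqrt{\E_n^{\max}}$, which bounds $\E_n^{\max}$ only by a constant depending on~$L$, not by anything tending to zero with~$\E_n(0)$. (Separately, the obstacle $CL/c_0<1$ you encounter is artificial: dropping the kinetic term gives the pointwise bound $\E_n(t)\ge c_0\gamma^0_n(t)$, whence $\dot\E_n\le(C/c_0)\E_n$ and $\E_n(t)\le\E_n(0)e^{(C/c_0)t}$ by Gr\"onwall directly---but this still relies on~$c_0$.)

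The paper breaks the circularity via the ``chain of subintervals'' you mention in passing but do not develop. One fixes $N$ so large that, purely by continuity of $\tpm{\cdot}$ on~$TM$, any curve~$\alpha$ on $[0,L/N]$ with $|\dot\alpha|\le2$, $|\nabla_{\dot\alpha}\dot\alpha|\le1$ and initial data $\delta$-close to $(\sigma(t_0),\dot\sigma(t_0))$ satisfies $\tpm{\dot\alpha(t)}\ge\tfrac12\tpm{\dot\sigma(t_0)}$ throughout; this a~priori estimate does not assume the conclusion. On $[0,L/N]$ the Gr\"onwall argument then runs cleanly and gives $\E_n\to0$, hence $\gamma^0_n,\dot\gamma^0_n\to0$; lemma~\ref{lma:bd-par} (your remainder computation) together with an Ascoli-Arzel\`a argument (lemma~\ref{lma:chart-conv}) then gives $(\bd\gamma_n,\dbd\gamma_n)\to(\sigma,\dot\sigma)$ on that subinterval. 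The endpoint data feed the next subinterval, and one iterates $N$ times. Your ODE continuous-dependence argument for the tangential part is a legitimate substitute for the paper's compactness argument, but it too must be run chart by chart, so the subinterval decomposition is needed in any case.
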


The proof of this lemma will be given right after lemma~\ref{lma:1st-conv}.

We split the interval~$[0,L]$ into~$N$ parts of length~$L/N$ in such a way that
for each $i\in\{0,\dots,N-1\}$ there is a chart on~$\partial M$ such that the chart contains the ball $B(\sigma(iL/N),2L/N)$. 

We also assume~$N$ to be so large that the following holds for some~$\delta>0$:
if $\alpha:[0,L/N]\to M$ is a~$C^1$ curve with $\abs{\dot{\alpha}}\leq2$, $\abs{\nabla_{\dot{\alpha}}\dot{\alpha}}\leq1$ and
\begin{equation}
d_{TM}((\alpha(0),\dot{\alpha}(0)),(\sigma(t_0),\dot{\sigma}(t_0)))
<
\delta
\end{equation}
for some $t_0\in[0,L(1-1/N)]$, then
\begin{equation}
\label{eq:sff-ell}
\tpm{\dot{\alpha}(t)}
\geq
\frac{1}{2}\tpm{\dot{\sigma}(t_0)}
\end{equation}
for all~$t\in[0,L/N]$.
This choice of~$N$ is possible due to continuity of of~$\tpm{\cdot}$.

\begin{lemma}
\label{lma:chart-conv}
Let $\sigma:[0,T]\to\partial M$ be a geodesic on a single coordinate chart of~$\partial M$ in such a way that the chart contains the ball $B(\sigma(0),2T)$.
Suppose we have a sequence of curves~$(\sigma_n)$ parametrized by~$[0,T]$ on this chart satisfying
\begin{equation}
\begin{split}
&\abs{\dot{\sigma}_n}\to1\text{ uniformly,}
\\
&\abs{\nabla_{\dot{\sigma}_n}\dot{\sigma}_n}\to0\text{ uniformly,}
\\
&\sigma_n(0)\to\sigma(0)\text{, and}
\\
&\dot{\sigma}_n(0)\to\dot{\sigma}(0).
\end{split}
\end{equation}
Then for any~$t\in[0,T]$ we have
\begin{equation}
(\sigma_n(t),\dot{\sigma}_n(t))\to(\sigma(t),\dot{\sigma}(t)),
\end{equation}
and the convergence is uniform.
\end{lemma}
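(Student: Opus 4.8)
The plan is to reduce the statement to a continuous-dependence-on-initial-data result for the geodesic equation, treated as a first-order ODE system on the chart. First I would write the curves in coordinates: set $u_n(t)=(\sigma_n(t),\dot\sigma_n(t))\in\R^{m-1}\times\R^{m-1}$ and $u(t)=(\sigma(t),\dot\sigma(t))$. The geodesic $\sigma$ solves $\dot u=\Phi(u)$ where $\Phi(p,q)=(q,-\bd\Gamma(p)(q,q))$, the usual first-order form of the geodesic equation; since the metric is $C^3$ the Christoffel symbols $\bd\Gamma$ are $C^1$ on the chart, hence $\Phi$ is locally Lipschitz. The curve $\sigma_n$ solves instead $\dot u_n=\Phi(u_n)+r_n(t)$, where the remainder $r_n(t)=(0,\nabla_{\dot\sigma_n}\dot\sigma_n$ expressed in coordinates$)$ collects the acceleration term; by hypothesis $\abs{\nabla_{\dot\sigma_n}\dot\sigma_n}\to0$ uniformly on $[0,T]$, and since $\abs{\dot\sigma_n}\to1$ uniformly the coordinate acceleration $\ddot\sigma_n^{\,i}=-\bd\Gamma^i_{jk}\dot\sigma_n^j\dot\sigma_n^k+(\text{coord.\ of }\nabla_{\dot\sigma_n}\dot\sigma_n)^i$ stays uniformly bounded, so the $u_n$ stay in a fixed compact subset $K$ of the chart on which $\Phi$ has a Lipschitz constant $\Lambda$.

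The key step is a Gr\"onwall estimate. Writing $e_n(t)=\abs{u_n(t)-u(t)}$ and integrating the two ODEs,
\begin{equation}
e_n(t)\le e_n(0)+\int_0^t\abs{\Phi(u_n(s))-\Phi(u(s))}\,\der s+\int_0^t\abs{r_n(s)}\,\der s
\le e_n(0)+T\aabs{r_n}_\infty+\Lambda\int_0^t e_n(s)\,\der s,
\end{equation}
and Gr\"onwall's inequality gives $e_n(t)\le\bigl(e_n(0)+T\aabs{r_n}_\infty\bigr)e^{\Lambda T}$ for all $t\in[0,T]$. The hypotheses $\sigma_n(0)\to\sigma(0)$, $\dot\sigma_n(0)\to\dot\sigma(0)$ give $e_n(0)\to0$, and $\aabs{r_n}_\infty\to0$ by the uniform convergence of the accelerations; hence $\sup_{t\in[0,T]}e_n(t)\to0$, which is exactly the asserted uniform convergence.

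I expect the main obstacle to be purely a matter of care rather than depth: one must verify that the $u_n$ genuinely remain in a common compact set $K$ inside the chart for all $n$ large and all $t\in[0,T]$, so that a single Lipschitz constant $\Lambda$ applies. This requires combining the uniform bound on $\abs{\dot\sigma_n}$ with the resulting uniform bound on $\abs{\ddot\sigma_n}$ to control how far $\sigma_n$ can travel in time $T$, and using that the chart contains the ball $B(\sigma(0),2T)$ — the radius $2T$ is precisely what leaves room for $\sigma_n$, whose speed is near $1$, to stay inside over the time interval of length $T$. Once that containment is established the rest is the standard Gr\"onwall argument above; no delicate estimate is needed beyond it.
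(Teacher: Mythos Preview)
Your proof is correct and takes a genuinely different route from the paper. The paper argues qualitatively via Ascoli--Arzel\`a: the family $((\sigma_n,\dot\sigma_n))_n$ is uniformly bounded and equicontinuous, so every subsequence has a further subsequence converging uniformly to some limit, and that limit must solve the geodesic equation with the same initial data as~$\sigma$, hence equals~$\sigma$ by uniqueness; the full sequence then converges by the usual ``every subsequence has a sub-subsequence with the same limit'' argument. Your approach instead recasts the problem as a perturbed first-order system $\dot u_n=\Phi(u_n)+r_n$ and applies Gr\"onwall directly, yielding the quantitative bound $\sup_t\abs{u_n(t)-u(t)}\le(e_n(0)+T\aabs{r_n}_\infty)e^{\Lambda T}$.

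The trade-offs: your argument is more explicit and gives a rate of convergence, which could in principle be threaded through the later estimates in the paper; it also avoids Ascoli--Arzel\`a entirely. The paper's argument is shorter to state and sidesteps the bookkeeping you correctly flag as the main obstacle (confining all $u_n$ to a common compact set $K$ on which a single Lipschitz constant is valid). That bookkeeping is indeed routine---the containment follows exactly as you outline from $\abs{\dot\sigma_n}\to1$, $\sigma_n(0)\to\sigma(0)$, and the fact that the chart contains $B(\sigma(0),2T)$---but it must be done before one may invoke the Lipschitz bound, so be sure to establish $K$ first and only then fix~$\Lambda$.
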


\begin{proof}
The sequence $((\sigma_n(t),\dot{\sigma}_n(t)))_n$ in $\R^{2m}$ is uniformly bounded on the interval~$[0,T]$.
Due to the assumption that $\abs{\dot{\sigma}_n}\to1$ uniformly we eventually have $\abs{\dot{\sigma}_n}<2$.
Thus the curve $\sigma_n([0,T])$ is eventually contained in the chart.

We start by showing that there is a subsequence converging uniformly to~$(\sigma,\dot{\sigma})$.
By the Ascoli-Arzel\`a theorem, there is a uniformly converging subsequence.
By the assumptions, the limit~$\sigma_\infty$ is a geodesic with $\sigma_\infty(0)=\sigma(0)$, and $\dot{\sigma}_\infty(0)=\dot{\sigma}(0)$.
By uniqueness of geodesics,~$\sigma_\infty=\sigma$.

Suppose then that the full sequence does not converge to~$(\sigma,\dot{\sigma})$.
Then there is a neighborhood $U$ of $(\sigma,\dot{\sigma})$ in $\R^{2m}$ and a subsequence which stays outside~$U$.
By the Ascoli-Arzel\`a theorem this subsequence has a uniformly converging subsequence; denote the limit by~$\sigma_\infty$.
As above, one may conclude that~$\sigma_\infty=\sigma$, which is in contradiction with the assumption that the subsequence lies outside a neighborhood of~$\sigma$.
\end{proof}

\begin{lemma}
\label{lma:bd-par}
If~$\gamma$ is a geodesic in $B(\partial M,h)$, then
\begin{equation}
(\bd{\nabla}_{\dbd{\gamma}}\dbd{\gamma})^i
=
-\dot{\gamma}^0\dot{\gamma}^jg^{ik}(0,\bd{\gamma})g_{kj,0}(0,\bd{\gamma})
-\dot{\gamma}^\alpha\dot{\gamma}^\beta\int_0^{\gamma^0}\Gamma^{i}_{\phantom{i}jk,0}(t,\bd{\gamma})\der t
\end{equation}
for each~$i$.
\end{lemma}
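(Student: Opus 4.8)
The plan is to compare the geodesic equation of $\gamma$ in $M$, written in boundary normal coordinates, with the intrinsic geodesic operator of $\partial M$ applied to the projected curve $\bd{\gamma}$. In these coordinates $\partial M=\{x^0=0\}$ and $\bd{\gamma}=(0,\gamma^1,\gamma^2,\dots)$, so $\bd{\gamma}$ has tangential coordinates $(\gamma^1,\dots,\gamma^{m-1})$, the $i$-th of which is exactly the function $\gamma^i$; hence $\ddbd{\gamma}^i=\ddot{\gamma}^i$ and $\dbd{\gamma}^i=\dot{\gamma}^i$, and therefore $(\bd{\nabla}_{\dbd{\gamma}}\dbd{\gamma})^i=\ddot{\gamma}^i+\bd{\Gamma}^i_{\phantom{i}jk}(\bd{\gamma})\dot{\gamma}^j\dot{\gamma}^k$. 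Since $\gamma$ is a genuine geodesic (no reflections), the geodesic equation in $M$ holds at every point and gives $\ddot{\gamma}^i=-\Gamma^i_{\phantom{i}\alpha\beta}(\gamma)\dot{\gamma}^\alpha\dot{\gamma}^\beta$. Thus the whole problem reduces to rewriting $-\Gamma^i_{\phantom{i}\alpha\beta}(\gamma)\dot{\gamma}^\alpha\dot{\gamma}^\beta+\bd{\Gamma}^i_{\phantom{i}jk}(\bd{\gamma})\dot{\gamma}^j\dot{\gamma}^k$ in the asserted form.

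Next I would feed in the identities~\eqref{eq:G-facts}. Because $\Gamma^i_{\phantom{i}00}\equiv0$ and $\Gamma^i_{\phantom{i}j0}=\frac{1}{2}g^{ik}g_{kj,0}$, the contraction $\Gamma^i_{\phantom{i}\alpha\beta}(\gamma)\dot{\gamma}^\alpha\dot{\gamma}^\beta$ splits into $g^{ik}(\gamma)g_{kj,0}(\gamma)\dot{\gamma}^j\dot{\gamma}^0+\Gamma^i_{\phantom{i}jk}(\gamma)\dot{\gamma}^j\dot{\gamma}^k$. For the second summand I would use the last line of~\eqref{eq:G-facts}, which identifies $\bd{\Gamma}^i_{\phantom{i}jk}$ with the ambient $\Gamma^i_{\phantom{i}jk}$ at points of $\partial M$; since $\bd{\gamma}$ lies on $\partial M$ this gives $\bd{\Gamma}^i_{\phantom{i}jk}(\bd{\gamma})=\Gamma^i_{\phantom{i}jk}(0,\bd{\gamma})$, so the two tangentially indexed Christoffel terms combine into $-[\Gamma^i_{\phantom{i}jk}(\gamma)-\Gamma^i_{\phantom{i}jk}(0,\bd{\gamma})]\dot{\gamma}^j\dot{\gamma}^k$. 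As the metric is $C^3$, the Christoffel symbols are $C^1$ in the normal variable, so the fundamental theorem of calculus along the normal coordinate, with the tangential coordinates of $\gamma$ held fixed, turns this bracket into $\dot{\gamma}^j\dot{\gamma}^k\int_0^{\gamma^0}\Gamma^i_{\phantom{i}jk,0}(t,\bd{\gamma})\der t$; here $\gamma^0\geq0$, since $x^0$ is the distance to $\partial M$.

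Finally I would treat the leftover term $-g^{ik}(\gamma)g_{kj,0}(\gamma)\dot{\gamma}^j\dot{\gamma}^0=-2\Gamma^i_{\phantom{i}j0}(\gamma)\dot{\gamma}^j\dot{\gamma}^0$ in exactly the same way: add and subtract its boundary value and apply the fundamental theorem of calculus to get $\Gamma^i_{\phantom{i}j0}(\gamma)-\Gamma^i_{\phantom{i}j0}(0,\bd{\gamma})=\int_0^{\gamma^0}\Gamma^i_{\phantom{i}j0,0}(t,\bd{\gamma})\der t$. This produces the boundary term $-\dot{\gamma}^0\dot{\gamma}^jg^{ik}(0,\bd{\gamma})g_{kj,0}(0,\bd{\gamma})$ together with the correction $-2\dot{\gamma}^j\dot{\gamma}^0\int_0^{\gamma^0}\Gamma^i_{\phantom{i}j0,0}(t,\bd{\gamma})\der t$. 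Collecting the three integral contributions, namely $-\dot{\gamma}^j\dot{\gamma}^k\int_0^{\gamma^0}\Gamma^i_{\phantom{i}jk,0}(t,\bd{\gamma})\der t$, the term $-2\dot{\gamma}^j\dot{\gamma}^0\int_0^{\gamma^0}\Gamma^i_{\phantom{i}j0,0}(t,\bd{\gamma})\der t$, and the trivial $-(\dot{\gamma}^0)^2\int_0^{\gamma^0}\Gamma^i_{\phantom{i}00,0}(t,\bd{\gamma})\der t=0$, they reassemble into the single contraction $-\dot{\gamma}^\alpha\dot{\gamma}^\beta\int_0^{\gamma^0}\Gamma^i_{\phantom{i}\alpha\beta,0}(t,\bd{\gamma})\der t$, which is the right-hand side of the claimed identity. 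The only delicate point is the bookkeeping: one must keep careful track of whether each Christoffel symbol or metric component is evaluated at $\gamma$ or at its projection $(0,\bd{\gamma})$, and one must notice that every ``off-boundary'' discrepancy can be packaged into a single normal-derivative integral contracted with $\dot{\gamma}^\alpha\dot{\gamma}^\beta$; the sole analytic input is the $C^1$ dependence of the Christoffel symbols on the normal coordinate, which is precisely what the standing $C^3$ hypothesis on $g$ guarantees.
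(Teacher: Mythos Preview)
Your proof is correct and follows essentially the same approach as the paper: write $(\bd{\nabla}_{\dbd{\gamma}}\dbd{\gamma})^i=\ddot{\gamma}^i+\bd{\Gamma}^i_{\phantom{i}jk}(0,\bd{\gamma})\dot{\gamma}^j\dot{\gamma}^k$, substitute the geodesic equation for $\ddot{\gamma}^i$, invoke the identities~\eqref{eq:G-facts}, and apply the fundamental theorem of calculus in the normal variable. The only difference is organizational: the paper performs a single three-term add-and-subtract with Greek indices, whereas you first split $\Gamma^i_{\phantom{i}\alpha\beta}\dot\gamma^\alpha\dot\gamma^\beta$ by index type and then apply the FTC trick to each piece separately before reassembling; both routes yield the same expression.
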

\begin{proof}
Since~$\gamma$ is a geodesic, we have
\begin{equation}
({\nabla}_{\dot{\gamma}}\dot{\gamma})^i
=
\ddot{\gamma}^i+\Gamma^i_{\phantom{i}\alpha\beta}\dot{\gamma}^\alpha\dot{\gamma}^\beta
=
0
\end{equation}
for each~$i$.

Using this we find
\begin{equation}
\begin{split}
(\bd{\nabla}_{\dbd{\gamma}}\dbd{\gamma})^i
&=
\ddbd{\gamma}^i+\bd{\Gamma}^i_{\phantom{i}jk}(0,\bd{\gamma})\dbd{\gamma}^j\dbd{\gamma}^k
\\&=
\bd{\Gamma}^i_{\phantom{i}\alpha\beta}(0,\bd{\gamma})\dbd{\gamma}^\alpha\dbd{\gamma}^\beta-\Gamma^i_{\phantom{i}\alpha\beta}(\gamma^0,\bd{\gamma})\dot{\gamma}^\alpha\dot{\gamma}^\beta
\\&=
[\bd{\Gamma}^i_{\phantom{i}\alpha\beta}(0,\bd{\gamma})-\Gamma^i_{\phantom{i}\alpha\beta}(0,\bd{\gamma})]\dbd{\gamma}^\alpha\dbd{\gamma}^\beta
\\&\quad+
\Gamma^i_{\phantom{i}\alpha\beta}(0,\bd{\gamma})[\dbd{\gamma}^\alpha\dbd{\gamma}^\beta-\dot{\gamma}^\alpha\dot{\gamma}^\beta]
\\&\quad+
[\Gamma^i_{\phantom{i}\alpha\beta}(0,\bd{\gamma})-\Gamma^i_{\phantom{i}\alpha\beta}(\gamma^0,\bd{\gamma})]\dot{\gamma}^\alpha\dot{\gamma}^\beta
\\&=
0
-
2\Gamma^i_{\phantom{i}j0}(0,\bd{\gamma})\dot{\gamma}^0\dot{\gamma}^j
-
\dot{\gamma}^\alpha\dot{\gamma}^\beta\int_0^{\gamma^0}\Gamma^{i}_{\phantom{i}jk,0}(t,\bd{\gamma})\der t.
\end{split}
\end{equation}
This is the claimed result.
\end{proof}

It follows from the geodesic equation with~\eqref{eq:sff-def} and~\eqref{eq:G-facts} that apart from points of reflection~$\gamma$ satisfies
\begin{equation}
\label{eq:gdd0}
\ddot{\gamma}^0+\tpm{\dot{\gamma}}=0.
\end{equation}
From this it follows that
\begin{equation}
\label{eq:dE}
\dot{\E}=\gamma^0\Der{t}\tpm{\dot{\gamma}}.
\end{equation}
We define $A_{ijk}=g_{il,0}\Gamma^l_{\phantom{l}kj}-\frac{1}{2}g_{ij,0k}$ and write $A(v)=A_{ijk}v^iv^jv^k$ for short.
A simple calculation shows that\footnote{
We wish to point out that $(g_{ik,0}g^{kl}g_{lj,0}-\frac{1}{2}g_{ij,00})=-\frac{1}{2}g_{ik}(g^{kl}g_{lr,0}g^{rs})_{,0}g_{sj}$ although we do not use this structure.
}
\begin{equation}
\label{eq:dsff1}
\Der{t}\tpm{\dot{\gamma}}
=
A(\dot{\gamma})
+(g_{ik,0}g^{kl}g_{lj,0}-\frac{1}{2}g_{ij,00})\dot{\gamma}^i\dot{\gamma}^j\dot{\gamma}^0
\end{equation}
and
\begin{equation}
\label{eq:dsff2}
\Der{t}\tpm{\dot{\sigma}}=A(\dot{\sigma}).
\end{equation}
Therefore
\begin{equation}
\label{eq:dE-est}
\dot{\E}
\leq C_1\gamma^0
\end{equation}
for some uniform constant~$C_1$.

Consider now the geodesic segment $\sigma|_{[iL/N,(i+1)L/N]}$, first for~$i=0$.
By admissibility of~$\sigma$ there is a constant~$C_2$ such that $\sff{\dot{\sigma}}{\dot{\sigma}}\geq2C_2$ on this segment.
By the estimate~\eqref{eq:sff-ell} and lemma~\ref{lma:bd-par} this implies that
\begin{equation}
\tpm{\dot{\gamma}_n}\geq C_2
\end{equation}
on this interval.
Thus $\E\geq C_2\gamma^0$ on this interval and thus $\dot{\E}\leq (C_1/C_2)\E$.

By Gr\"onwall's inequality we have $\E(t)\leq\E(0)\exp((C_1/C_2)t)$ for all $t\in[0,L/N]$ and so $\E\leq\E(0)\exp(C_1L/C_2N)$ on this interval.
Thus, as $\E_n(0)\to0$, $\E_n\to0$ uniformly on this interval.
But since $\gamma^0_n\leq\E_n/C_2$ and $\abs{\dot{\gamma}_n^0}\leq\sqrt{2\E_n}$, this implies that also $\gamma_n^0\to0$ and $\dot{\gamma}_n^0\to0$ uniformly.
By lemma~\ref{lma:bd-par} this implies that $\bd{\nabla}_{\dbd{\gamma}}\dbd{\gamma}\to0$ uniformly.

Combining these observations with lemma~\ref{lma:chart-conv} we obtain the following.

\begin{lemma}
\label{lma:1st-conv}
On the interval $[0,L/N]$ we have~\eqref{eq:unif-conv} uniformly.
\end{lemma}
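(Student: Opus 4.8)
The plan is to assemble the estimates derived in the two paragraphs preceding the statement and combine them with a single application of lemma~\ref{lma:chart-conv}; the lemma is essentially a repackaging of that discussion.

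First I would collect what the preceding estimates give on $[0,L/N]$. Admissibility of $\sigma$ provides a constant $C_2$ with $\tpm{\dot\sigma}\geq 2C_2$ along $\sigma|_{[0,L/N]}$, and, since for large $n$ the initial data $(\gamma_n(0),\dot\gamma_n(0))$ lie within $\delta$ of $(\sigma(0),\dot\sigma(0))$, estimate~\eqref{eq:sff-ell} together with lemma~\ref{lma:bd-par} propagates this to $\tpm{\dot\gamma_n}\geq C_2$ on all of $[0,L/N]$. Hence $\E_n\geq C_2\gamma^0_n$ there, so~\eqref{eq:dE-est} becomes the differential inequality $\dot\E_n\leq (C_1/C_2)\E_n$, and Gr\"onwall's inequality gives $\E_n(t)\leq\E_n(0)e^{C_1L/(C_2N)}$ on $[0,L/N]$. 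Since $\E_n(0)\to0$ by the construction of the sequence, $\E_n\to0$ uniformly; the elementary bounds $\gamma^0_n\leq\E_n/C_2$ and $\abs{\dot\gamma^0_n}\leq\sqrt{2\E_n}$ then give $\gamma^0_n\to0$ and $\dot\gamma^0_n\to0$ uniformly. This is the first, second, and fifth line of~\eqref{eq:unif-conv}.

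For the remaining two lines I would apply lemma~\ref{lma:chart-conv} with $T=L/N$, with the chart about $\sigma(0)$ furnished by the choice of $N$, and with $\sigma_n\coloneqq\bd{\gamma}_n$. Its hypotheses are checked as follows: unit speed of $\gamma_n$ together with $g_{00}=1$ and $g_{0i}=0$ gives $g_{ij}(\gamma^0_n,\bd{\gamma}_n)\dot\gamma^i_n\dot\gamma^j_n=1-(\dot\gamma^0_n)^2$, and moving the base point to the boundary costs only $\Order(\gamma^0_n)$ because $g\in C^3$, so $\abs{\dbd{\gamma}_n}^2=1-(\dot\gamma^0_n)^2+\Order(\gamma^0_n)\to1$ uniformly; the formula of lemma~\ref{lma:bd-par} shows $\abs{\bd{\nabla}_{\dbd{\gamma}_n}\dbd{\gamma}_n}=\Order(\abs{\dot\gamma^0_n})+\Order(\gamma^0_n)\to0$ uniformly on each geodesic segment, the second term being controlled since the integral runs over an interval of length $\gamma^0_n$ and $\Gamma^i_{\phantom{i}jk,0}$ is bounded; and $\bd{\gamma}_n(0)\to\sigma(0)$, $\dbd{\gamma}_n(0)\to\dot\sigma(0)$ follow from $(\gamma_n(0),\dot\gamma_n(0))\to(\sigma(0),\dot\sigma(0))$. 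Lemma~\ref{lma:chart-conv} then delivers $(\bd{\gamma}_n,\dbd{\gamma}_n)\to(\sigma,\dot\sigma)$ uniformly on $[0,L/N]$, completing~\eqref{eq:unif-conv} there.

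The one point that needs care — and the only real, if minor, obstacle — is that $\gamma_n$ is a broken ray and may reflect on $[0,L/N]$ a number of times not bounded in $n$, so $\bd{\gamma}_n$ is merely piecewise $C^1$. This is harmless: $\dbd{\gamma}_n$ and $(\dot\gamma^0_n)^2$ are continuous across reflections, so $\bd{\gamma}_n$ has continuous velocity and the piecewise second derivative is uniformly bounded; hence $(\bd{\gamma}_n,\dbd{\gamma}_n)$ is uniformly equicontinuous and the Ascoli-Arzel\`a argument inside lemma~\ref{lma:chart-conv} applies verbatim, the uniform limit being a geodesic since the acceleration tends to zero uniformly between reflections and the reflections contribute no jump in the velocity. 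One should also note that the energy estimates are valid only as long as $\gamma_n$ stays in $B(\partial M,h)$; a standard continuation argument using $\gamma^0_n(0)\to0$ and the bound on $\E_n$ just obtained shows this holds for all large $n$.
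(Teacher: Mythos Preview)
Your proposal is correct and follows exactly the paper's approach: the paper's ``proof'' of this lemma is precisely the two paragraphs preceding the statement (energy bound via Gr\"onwall, then the three scalar convergences, then $\bd{\nabla}_{\dbd{\gamma}_n}\dbd{\gamma}_n\to0$ by lemma~\ref{lma:bd-par}) capped by the sentence ``Combining these observations with lemma~\ref{lma:chart-conv} we obtain the following.'' You have simply made explicit the hypothesis checks for lemma~\ref{lma:chart-conv} (notably $\abs{\dbd{\gamma}_n}\to1$) and the reflection/continuation caveats that the paper leaves implicit.
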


We are now ready to finish the proof of lemma~\ref{lma:unif-conv}.

\begin{proof}[Proof of lemma~\ref{lma:unif-conv}]
By lemma~\ref{lma:1st-conv} we have~\eqref{eq:unif-conv} uniformly on $[0,L/N]$.
In particular, it follows that $\gamma_n(L/N)\to\sigma(L/N)$, $\dot{\gamma}_n(L/N)\to\dot{\sigma}(L/N)$ and $\E_n(L/N)\to0$.
This allows us to use lemma~\ref{lma:1st-conv} again on the interval~$[L/N,2L/N]$.
Although the rate of convergence may deteriorate, the same uniform convergence results hold on the combined interval~$[0,2L/N]$.
Iterating this argument, we finally conclude the proof of lemma~\ref{lma:unif-conv}.
\end{proof}

\begin{remark}
\label{rmk:ell}
Since $\gamma_n\to\sigma$ uniformly in~$C^1$ and $\tpm{\dot{\sigma}}$ is bounded from below by admissibility and compactness, we may also assume that $\tpm{\dot{\gamma}_n}$ is bounded from below with a constant independent of time~$t$ or index~$n$.
In particular, this implies that $\gamma_n^0=\Order(\E_n(0))$.
\end{remark}

\subsection{Further estimates}

The results of the previous section combined with lemma~\ref{lma:hoptime} below are enough to prove theorem~\ref{thm:f-rec} for~$k=0$.
For $k\geq1$ we need more tools, and they will be developed in this section.

\begin{lemma}
\label{lma:hoptime}
Let $\tau$ be the time between two adjacent zeroes of~$\gamma^0$. 
If the first of the two zeros is at time~$t_0$, then
\begin{equation}
\abs{\frac{\tau}{2\sqrt{2\E(t_0)}/\tpm{\dot{\gamma}(t_0)}}-1}=\Order(\sqrt{\E(t_0)}).
\end{equation}
\end{lemma}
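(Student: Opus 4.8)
The plan is to analyze the motion of $\gamma^0$ between two adjacent zeros as a perturbation of the solution to the model equation $\ddot{u} + \tpm{\dot{\sigma}(t_0)}\, u = 0$, which (with $u(t_0)=0$) completes a half-period in exactly time $\pi/\sqrt{\tpm{\dot{\sigma}(t_0)}}$ — wait, more precisely the relevant comparison is with the equation whose half-excursion above zero has the stated length. Actually the cleanest route is energy-based: on the interval $[t_0,t_0+\tau]$ the function $\gamma^0$ is nonnegative (or nonpositive; treat one case, the other is symmetric), rises to a maximum, and returns to zero, and on this interval there are no reflections, so \eqref{eq:gdd0} holds: $\ddot{\gamma}^0 = -\tpm{\dot{\gamma}}$. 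The energy $\E(t) = \tfrac12(\dot\gamma^0)^2 + \gamma^0\tpm{\dot\gamma}$ is nearly conserved: by \eqref{eq:dE-est}, $\dot\E = \Order(\gamma^0)$, and since $\gamma^0 \leq \E/C_2$ is itself $\Order(\E(t_0))$ on the relevant interval (using remark~\ref{rmk:ell} and that $\tau = \Order(\sqrt{\E(t_0)})$ is short), we get $\E(t) = \E(t_0)(1 + \Order(\sqrt{\E(t_0)}))$ across the excursion.

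First I would establish the a priori bounds: $\gamma^0 = \Order(\E(t_0))$ and $\dot\gamma^0 = \Order(\sqrt{\E(t_0)})$ on $[t_0, t_0+\tau]$, and that $\tpm{\dot\gamma(t)} = \tpm{\dot\gamma(t_0)}(1 + \Order(\sqrt{\E(t_0)}))$ on this interval. The last point uses \eqref{eq:dsff1}: $\Der{t}\tpm{\dot\gamma}$ is bounded uniformly, so over a time interval of length $\Order(\sqrt{\E(t_0)})$ the quantity $\tpm{\dot\gamma}$ changes by $\Order(\sqrt{\E(t_0)})$, and it is bounded below by a uniform positive constant. Next, with $\kappa := \tpm{\dot\gamma(t_0)}$, I would write the (approximately conserved) energy relation $\tfrac12(\dot\gamma^0)^2 + \kappa\gamma^0 = \E(t_0) + \Order(\E(t_0)^{3/2})$ and solve for $\dot\gamma^0$, so that $\tau = \int_0^{(\gamma^0)_{\max}} \frac{2\,d\xi}{\sqrt{2(\E(t_0) - \kappa\xi) + \Order(\E(t_0)^{3/2})}}$ after splitting the excursion at its apex (where $\dot\gamma^0=0$) and using symmetry-up-to-lower-order of the two halves. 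The leading term of this integral is exactly $2\sqrt{2\E(t_0)}/\kappa$ — the maximum height being $\approx \E(t_0)/\kappa$ — and the correction terms contribute a relative error of size $\Order(\sqrt{\E(t_0)})$.

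The main technical step, and the place where care is needed, is controlling the integral near its endpoints: the integrand has an integrable singularity where $\dot\gamma^0 = 0$ (at the apex and, for the reflected picture, conceptually at the zeros), so one cannot naively bound the $\Order$-terms inside the square root by pulling them out. The standard fix is to substitute $\xi = (\E(t_0)/\kappa)\sin^2\psi$ (or directly change variables to $\dot\gamma^0$ itself, using $\ddot\gamma^0 = -\tpm{\dot\gamma}$ to write $d t = d(\dot\gamma^0)/\ddot\gamma^0$), which turns the singular integral into a regular one over a fixed interval, after which the perturbation terms are genuinely uniformly small and can be estimated crudely. I would carry out this change of variables, expand $\tpm{\dot\gamma}$ and $\E$ to first order in their corrections, and collect the error, obtaining $\tau = \frac{2\sqrt{2\E(t_0)}}{\tpm{\dot\gamma(t_0)}}(1 + \Order(\sqrt{\E(t_0)}))$, which is the assertion. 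A secondary point to check is that the apex height is indeed $\tfrac{\E(t_0)}{\kappa}(1+\Order(\sqrt{\E(t_0)}))$ rather than something smaller — this follows because $\dot\gamma^0(t_0) = \pm\sqrt{2\E(t_0)}(1+\Order(\sqrt{\E(t_0)}))$ at a zero of $\gamma^0$, by definition of $\E$, so the excursion genuinely reaches the predicted height.
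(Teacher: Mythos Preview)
Your approach is correct in outline, but it takes a genuinely different and more elaborate route than the paper's proof. The paper does not use the energy formalism at all here: it simply sets $v=\dot\gamma^0(t_0)$, $a=\tpm{\dot\gamma(t_0)}$, notes that $\E(t_0)=\tfrac12 v^2$ so that the target time is $\tilde\tau=2v/a$, and then uses the Lipschitz bound $|\tpm{\dot\gamma(t)}-a|\leq Ct$ together with $\ddot\gamma^0=-\tpm{\dot\gamma}$ to sandwich
\[
h^-(t)\coloneqq vt-\tfrac12 at^2-\tfrac{C}{6}t^3\ \leq\ \gamma^0(t)\ \leq\ vt-\tfrac12 at^2+\tfrac{C}{6}t^3\eqqcolon h^+(t).
\]
Evaluating $h^\pm$ at $\tilde\tau(1\pm\eps)$ with $\eps=\tfrac{8C}{3a^2}v$ shows $h^+(\tilde\tau(1+\eps))<0$ and $h^-(\tilde\tau(1-\eps))>0$, which pins down $\tau$ in the interval $(\tilde\tau(1-\eps),\tilde\tau(1+\eps))$. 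This is a two-line computation once the cubic bounds are in hand, and it entirely sidesteps the singular-integral issue you correctly identify in your energy approach.

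Two remarks on your version. First, there is a mild circularity: you invoke $\tau=\Order(\sqrt{\E(t_0)})$ in order to bound the variation of $\tpm{\dot\gamma}$ and of $\E$ over the excursion, but that is precisely what you are trying to prove. This is easily repaired (e.g.\ from $\ddot\gamma^0\leq -C_2<0$ one gets $\gamma^0(t)\leq vt-\tfrac12 C_2 t^2$, hence $\tau\leq 2v/C_2$ a priori), but it should be made explicit. Second, of the two changes of variable you mention, the one via $\dot\gamma^0$ is much cleaner than the one via $\gamma^0$: writing $dt=-d(\dot\gamma^0)/\tpm{\dot\gamma}$ gives $\tau=\int_{-v'}^{v}\frac{dw}{\tpm{\dot\gamma}}$ with $v'=\sqrt{2\E(t_0+\tau)}$, and since $\tpm{\dot\gamma}=\kappa(1+\Order(\sqrt{\E(t_0)}))$ and $v'=v(1+\Order(\sqrt{\E(t_0)}))$ this yields the result with no singular integral to worry about. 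The $\gamma^0$-integral with the $\sin^2\psi$ substitution also works but is unnecessarily delicate here.
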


\begin{proof}
We take~$t_0=0$ for simplicity and denote $v=\dot{\gamma}^0(0)$ and $a=\tpm{\dot{\gamma}(0)}$.
The claim is thus that
\begin{equation}
\label{eq:hopest}
\abs{\frac{\tau}{\tilde{\tau}}-1}=\Order(v),
\end{equation}
where $\tilde{\tau}=2v/a$.
We have $\ddot{\gamma}^0(t)=-\tpm{\dot{\gamma}(t)}$ and by Lipschitz continuity of the second fundamental form
\begin{equation}
\abs{\tpm{\dot{\gamma}(t)}-\tpm{\dot{\gamma}(0)}}\leq Ct
\end{equation}
for $t\in[0,\tau]$ for some constant~$C$.
By remark~\ref{rmk:ell}~$a$ is bounded from above and below, and the constant~$C$ may also be taken uniform once a boundary geodesic~$\sigma$ is fixed.

The fundamental theorem of calculus gives
\begin{equation}
\begin{split}
\gamma^0(t)
&=
\dot{\gamma}^0(0)t+
\int_0^t\int_0^s\ddot{\gamma}^0(u)\der u\der s
\\&\leq
vt+
\int_0^t\int_0^s(-a+Cu)\der u\der s
\\&=
vt-\frac{1}{2}at^2+\frac{C}{6}t^3
\eqqcolon
h^+(t)
.
\end{split}
\end{equation}
Suppose $v<\frac{3a^2}{8C}$ and denote $\eps=\frac{8C}{3a^2}v\in(0,1)$.
Then $\gamma^0(\tilde{\tau}(1+\eps))\leq h^+(\tilde{\tau}(1+\eps))<0$ and thus $\tau<\tilde{\tau}(1+\eps)$.

Similarly we have
\begin{equation}
\gamma^0(t)
\geq
vt-\frac{1}{2}at^2-\frac{C}{6}t^3
\eqqcolon
h^-(t).
\end{equation}
With the same choice for~$\eps$ and~$v$ we have $\gamma^0(\tilde{\tau}(1-\eps))\geq h^-(\tilde{\tau}(1-\eps))>0$ and thus $\tau>\tilde{\tau}(1-\eps)$.

Combining these results we have
\begin{equation}
\abs{\frac{\tau}{\tilde{\tau}}-1}
<
\eps
=
\frac{8C}{3a^2}v,
\end{equation}
which is the estimate~\eqref{eq:hopest}.
\end{proof}

\begin{remark}
\label{rmk:C2}
The above lemma implies that the points of reflection on a billiard trajectory cannot accumulate.
If we reduce the regularity assumption of the boundary~$\partial M$ from~$C^3$ to $C^2$, there is an example by Halpern~\cite{H:strange-billiard} in the plane where the points indeed do accumulate.
Halpern also proves that~$C^3$ regularity is enough to prevent accumulation~\cite[Theorem~3]{H:strange-billiard}.
\end{remark}

We have the following important heuristics:
\begin{equation}
\label{eq:heur}
\gamma^0\approx\frac{2}{3}\E/\tpm{\dot{\gamma}}
\quad\text{``on average''.}
\end{equation}
Compare this with throwing a ball (under ideal circumstances), when the height at time~$t$ is $h(t)=vt-\frac{1}{2}at^2$, where~$v$ is the initial vertical speed and~$a$ is the gravitational acceleration.
The energy $\E=\frac{1}{2}\dot{h}^2+ah$ is conserved, and the maximum height is~$\E/a$.
It is easy to calculate that the average height on the time interval $[0,2v/a]$ (when the ball is airborne) equals two thirds of the maximum height~$\E/a$.
The role of the gravitational acceleration is played here by the second fundamental form.
We make this heuristic explicit in the following two lemmas.

\begin{lemma}
\label{lma:heur-loc}
Let~$t_0$ and $t_0+\tau$ be two adjacent zeros of~$\gamma^0$. 
Then
\begin{equation}
\int_{t_0}^{t_0+\tau}\gamma^0(t)\der t=\frac{(\dot{\gamma}^0(t_0))^2}{3\tpm{\dot{\gamma}(t_0)}}\tau+\Order(\tau^4).
\end{equation}
\end{lemma}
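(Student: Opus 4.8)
The plan is to reduce the integral of~$\gamma^0$ over one arch to the integral of the model parabola $t\mapsto vt-\frac12 at^2$ and then control the error terms, where $v=\dot\gamma^0(t_0)$ and $a=\tpm{\dot\gamma(t_0)}$. First I would take $t_0=0$ without loss of generality and recall from the proof of lemma~\ref{lma:hoptime} the two-sided bounds $h^-(t)\le\gamma^0(t)\le h^+(t)$ with $h^\pm(t)=vt-\frac12 at^2\pm\frac{C}{6}t^3$, valid on $[0,\tau]$. Integrating these over $[0,\tau]$ gives
\begin{equation}
\left|\int_0^\tau\gamma^0(t)\der t-\left(\tfrac12 v\tau^2-\tfrac16 a\tau^3\right)\right|\le \tfrac{C}{24}\tau^4.
\end{equation}
Here one uses remark~\ref{rmk:ell} so that $a$ is bounded above and below and $C$ is uniform once~$\sigma$ is fixed; thus the $\Order(\tau^4)$ is the genuine uniform order.

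Next I would rewrite the leading term $\tfrac12 v\tau^2-\tfrac16 a\tau^3$ in terms of the claimed expression $\tfrac{v^2}{3a}\tau$. The key input is lemma~\ref{lma:hoptime}, which says $\tau=\tilde\tau(1+\Order(\sqrt{\E(t_0)}))$ with $\tilde\tau=2v/a$; equivalently, since $\E(t_0)=\tfrac12 v^2+\gamma^0(t_0)\,a$ but $\gamma^0(t_0)=0$ at a zero, we have $\E(t_0)=\tfrac12 v^2$, so $\sqrt{\E(t_0)}$ and $v$ are comparable and $\tau=\Order(v)=\Order(\sqrt{\E(t_0)})$. Writing $\tau=\tilde\tau(1+\eta)$ with $\eta=\Order(\tau)$, a direct substitution gives
\begin{equation}
\tfrac12 v\tau^2-\tfrac16 a\tau^3
=\tfrac12 v\tilde\tau^2-\tfrac16 a\tilde\tau^3+\Order(\tau^4)
=\tfrac{v^2\tilde\tau}{3}+\Order(\tau^4)
=\tfrac{v^2}{3a}\cdot 2v+\Order(\tau^4),
\end{equation}
using $\tfrac12 v\tilde\tau^2-\tfrac16 a\tilde\tau^3=\tfrac{2v^3}{a^2}-\tfrac{4v^3}{3a^2}=\tfrac{2v^3}{3a^2}$. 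Replacing $2v$ by $a\tilde\tau$ and then $\tilde\tau$ by $\tau(1+\Order(\tau))$ once more converts $\tfrac{2v^3}{3a^2}=\tfrac{v^2}{3a}\tilde\tau$ into $\tfrac{v^2}{3a}\tau+\Order(\tau^4)$, since the correction is $\tfrac{v^2}{3a}\cdot\Order(\tau^2)=\Order(\tau^4)$ (as $v=\Order(\tau)$). Combining with the integral estimate above yields exactly $\int_0^\tau\gamma^0(t)\der t=\tfrac{v^2}{3a}\tau+\Order(\tau^4)$, which is the claim after restoring $t_0$.

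The only mild subtlety — the step I expect to need the most care — is bookkeeping the various $\Order$ terms consistently in powers of $\tau$ versus powers of $v$ versus powers of $\sqrt{\E(t_0)}$: all three are comparable here (because $\gamma^0(t_0)=0$ forces $\E(t_0)=\tfrac12 v^2$, and lemma~\ref{lma:hoptime} ties $\tau$ to $v$), but one must check that no error term is secretly of order $\tau^3$ rather than $\tau^4$. The cubic terms $-\tfrac16 a\tau^3$ and $\tfrac16 a\tilde\tau^3$ cancel precisely because $\tau-\tilde\tau=\Order(\tau^2)$, not merely $\Order(\tau)$ — this is where lemma~\ref{lma:hoptime} is used in an essential way rather than just the crude bound $\tau=\Order(\sqrt{\E})$, and it is worth spelling out. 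Everything else is the elementary integration of cubics already carried out in lemma~\ref{lma:hoptime}.
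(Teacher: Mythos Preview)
Your proposal is correct and follows essentially the same approach as the paper: approximate $\gamma^0$ on one arch by the parabola $h(t)=vt-\tfrac12at^2$, use the Lipschitz bound on $\ddot\gamma^0$ to get an $\Order(t^3)$ pointwise error, and invoke lemma~\ref{lma:hoptime} to relate $\tau$ and $\tilde\tau=2v/a$ with error $\Order(\tau^2)$. The only cosmetic difference is that the paper integrates $h$ over $[0,\tilde\tau]$ and then bounds $\int_0^\tau\gamma^0-\int_0^{\tilde\tau}h=\int_0^\tau(\gamma^0-h)+\Order\bigl(\E(0)\lvert\tau-\tilde\tau\rvert\bigr)$, whereas you integrate over $[0,\tau]$ and convert $\tfrac12v\tau^2-\tfrac16a\tau^3$ to $\tfrac{v^2}{3a}\tau$ algebraically; the two computations are equivalent.
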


\begin{proof}
To simplify notation, we assume~$t_0=0$.

We denote $v=\dot{\gamma}^0(0)$, $a=\tpm{\dot{\gamma}(0)}$ and $h(t)=vt-\frac{1}{2}at^2$.
Since $\ddot{\gamma}^0(t)=-\tpm{\dot{\gamma}(t)}$ by~\eqref{eq:gdd0}, the function~$h$ is actually the second order Taylor polynomial of~$\gamma^0$ at~$t=0$.
And because $\ddot{\gamma}^0(t)$ is Lipschitz continuous in~$t$ by~\eqref{eq:gdd0}, we have $\gamma^0(t)-h(t)=\Order(t^3)$.

Elementary calculations show that
\begin{equation}
\int_0^{2v/a}h(t)\der t=\frac{(\dot{\gamma}^0(t_0))^2}{3\tpm{\dot{\gamma}(t_0)}}\tau
\end{equation}
and $h=\Order(\E(0))$.
Lemma~\ref{lma:hoptime} gives $\abs{\tau-\frac{2v}{a}}=\Order(\E(0))=\Order(\tau^2)$, so that
\begin{equation}
\begin{split}
&\int_0^\tau\gamma^0(t)\der t-\int_0^{2v/a}h(t)\der t
\\&\qquad=
\int_0^\tau(\gamma^0(t)-h(t))\der t+\Order\left(\E(0)\abs{\tau-\tfrac{2v}{a}}\right)
\\&\qquad=
\int_0^\tau\Order(t^3)\der t+\Order(\tau^2\cdot\tau^2)
\\&\qquad=
\Order(\tau^4),
\end{split}
\end{equation}
which is the desired estimate.
\end{proof}

\begin{lemma}
\label{lma:heur-glob}
Let~$T>0$ be a zero of~$\gamma^0$ and let $F:[0,T]\to\R$ be a continuous function. 
Then
\begin{equation}
\int_0^T\gamma^0(t)^kF(t)\der t
=
\int_0^T\left(\frac{2\E(t)}{3\tpm{\dot{\gamma}(t)}}\right)^kF(t)\der t+\order(\E(0)^k)
\end{equation}
for any $k\in\N$.
\end{lemma}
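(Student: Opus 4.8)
The idea is to reduce the global statement to the local one (lemma~\ref{lma:heur-loc}) by summing over the Hopf segments between consecutive zeros of~$\gamma^0$, and then to upgrade the error terms using the uniform estimates of section~\ref{sec:lma}. First I would let $0=t_0<t_1<\dots<t_P=T$ denote the zeros of~$\gamma^0$ in $[0,T]$ (this is a finite set by lemma~\ref{lma:hoptime} and remark~\ref{rmk:C2}), and write $\tau_j=t_{j+1}-t_j$. On each segment $[t_j,t_{j+1}]$ the integrand $\gamma^0(t)^kF(t)$ is handled as follows: since $F$ is continuous on the compact interval~$[0,T]$, it is uniformly continuous, so on $[t_j,t_{j+1}]$ we may replace $F(t)$ by $F(t_j)$ up to an error $o(1)$ (uniformly in~$j$), and similarly replace $\tpm{\dot\gamma(t)}$ by $\tpm{\dot\gamma(t_j)}$. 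Combined with $\gamma^0(t)=\Order(\E(t))=\Order(\E(0))$ (remark~\ref{rmk:ell}) and $\E$ itself varying slowly (by~\eqref{eq:dE-est}, $\dot\E=\Order(\E(0))$ since $\gamma^0=\Order(\E(0))$), these substitutions cost $o(\E(0)^k)$ after summation.

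Next, on each segment I would expand $\gamma^0(t)^k$. From lemma~\ref{lma:heur-loc} we know $\int_{t_j}^{t_{j+1}}\gamma^0(t)\der t=\frac{\dot\gamma^0(t_j)^2}{3\tpm{\dot\gamma(t_j)}}\tau_j+\Order(\tau_j^4)$; for general~$k$ one obtains, by the same Taylor-polynomial argument applied to $h(t)^k$ with $h(t)=vt-\tfrac12 at^2$, that $\int_0^{2v/a}h(t)^k\der t=c_k (v^2/a)^k (2v/a)$ for an explicit combinatorial constant~$c_k$, and the point is that this constant matches $(2/3)^k$ only after one identifies $v^2/a$ with $2\E/a$ \emph{in the averaged sense}. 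Here is the key algebraic input: on the segment $\E(t_j)=\tfrac12 v^2$ exactly at the zero $t_j$ of~$\gamma^0$ (since $\gamma^0(t_j)=0$ kills the second term of~$\E$), so $v^2 = 2\E(t_j)$, and hence $\bigl(\tfrac{\dot\gamma^0(t_j)^2}{3\tpm{\dot\gamma(t_j)}}\bigr) = \tfrac13\cdot\tfrac{2\E(t_j)}{\tpm{\dot\gamma(t_j)}}$, which is exactly the value of $\bigl(\tfrac{2\E}{3\tpm{\dot\gamma}}\bigr)^k$ for $k=1$; for higher~$k$ one must check that the naive power $(2\E/3\tpm{\dot\gamma})^k$ on the right-hand side, integrated against $F(t_j)$ over $[t_j,t_{j+1}]$, agrees with $c_k(v^2/a)^k\tau_j$ up to the admissible error — this forces the combinatorial identity $c_k = (2/3)^k$, which follows from the elementary integral $\int_0^1 (s-s^2)^k\,\der s$ being $\tfrac{(k!)^2}{(2k+1)!}\cdot$ something; the cleanest route is simply to observe that BOTH sides of the claimed lemma, applied segment-by-segment, reduce to $\int_{t_j}^{t_{j+1}}\gamma^0(t)^kF(t_j)\der t$ versus $\int_{t_j}^{t_{j+1}}(2\E(t_j)/3\tpm{\dot\gamma(t_j)})^kF(t_j)\der t$, so one only needs $\int_{t_j}^{t_{j+1}}\gamma^0(t)^k\der t = (2\E(t_j)/3\tpm{\dot\gamma(t_j)})^k\tau_j + \Order(\tau_j^{k+3})$, i.e.\ the $k$-th power version of lemma~\ref{lma:heur-loc}, which is proved identically.

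Finally I would sum over $j=0,\dots,P-1$. The main segments contribute $\int_0^T (2\E/3\tpm{\dot\gamma})^k F\,\der t$ plus the $o(1)$-type substitution errors above (which sum to $o(\E(0)^k)$ since $\sum_j\tau_j = T$ is fixed), and the per-segment remainders contribute $\sum_j \Order(\tau_j^{k+3})$. Since each $\tau_j = \Order(\sqrt{\E(t_j)})=\Order(\sqrt{\E(0)})$ by lemma~\ref{lma:hoptime} and remark~\ref{rmk:ell}, we get $\sum_j\Order(\tau_j^{k+3}) = \Order(\tau_j^{k+2})\sum_j\tau_j = \Order(\E(0)^{(k+2)/2})\cdot T = \order(\E(0)^k)$ provided $(k+2)/2 > k$, i.e.\ $k<2$ — so for $k\geq2$ this crude bound is not enough, and I expect \emph{this} to be the main obstacle. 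The fix is to not bound each remainder independently but to track the actual constant: the per-segment error in the $k$-th power Taylor estimate is $\Order(\tau_j^{k+3})$ with a constant controlled by derivatives of $\tpm{\dot\gamma}$ and~$F$ that are uniformly bounded, and crucially the number of segments is $P = \Order(T/\tau) = \Order(\E(0)^{-1/2})$, so $\sum_j \Order(\tau_j^{k+3}) = P\cdot\Order((\sqrt{\E(0)})^{k+3}) = \Order(\E(0)^{-1/2})\Order(\E(0)^{(k+3)/2}) = \Order(\E(0)^{(k+2)/2})$ — the same bound, confirming that the genuinely needed observation is sharper: one must exploit cancellation, or rather note that $\E(0)^{(k+2)/2} = \E(0)^k\cdot\E(0)^{(2-k)/2}$ and $\E(0)\to 0$ does \emph{not} give $\order(\E(0)^k)$ for $k\geq2$. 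Re-examining, the correct per-segment remainder is actually $\Order(\tau_j \cdot \E(t_j)^k \cdot \tau_j^{2})$-type — that is, the error is smaller than the main term by a relative factor $\Order(\tau_j^2)=\Order(\E(0))$, so $\sum_j (\text{error}) = \Order(\E(0))\sum_j(\text{main term segment}) = \Order(\E(0))\cdot\Order(\E(0)^{k-1}\cdot T)$, wait — the cleanest honest statement is: the relative error per segment is $\Order(\tau_j)=\Order(\sqrt{\E(0)})$, and since the main term summed is $\Order(\E(0)^{k-1})$ (as $\gamma^0 = \Order(\E(0))$ but $(2\E/3\tpm{\dot\gamma})^k$ integrated over $[0,T]$ is $\Order(\E(0)^k)$ — yes $\Order(\E(0)^k)$), the total error is $\Order(\sqrt{\E(0)})\cdot\Order(\E(0)^k) = \order(\E(0)^k)$. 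So the resolution is simply to phrase lemma~\ref{lma:heur-loc}'s $k$-power analogue as a \emph{relative} error $\Order(\tau_j)$ rather than an absolute $\Order(\tau_j^{k+3})$; with that phrasing the summation is immediate and the proof closes.
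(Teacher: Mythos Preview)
Your approach differs from the paper's: the paper proves only $k=1$ directly (segment decomposition plus lemma~\ref{lma:heur-loc}) and says higher $k$ follow by \emph{iterating} that result, whereas you attempt all $k$ at once via a ``$k$-th power analogue of lemma~\ref{lma:heur-loc}''.

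That direct route has a genuine obstruction: the per-segment identity you rely on,
\[
\int_{t_j}^{t_{j+1}} \gamma^0(t)^k\,\der t \;=\; \Bigl(\frac{2\E(t_j)}{3\tpm{\dot\gamma(t_j)}}\Bigr)^{k}\tau_j \;+\;\Order(\tau_j^{k+3}),
\]
is \emph{false} for $k\ge 2$. In the parabolic model $h(t)=vt-\tfrac12at^2$ on $[0,\tau]$, $\tau=2v/a$, a Beta integral gives the hop average
\[
\frac{1}{\tau}\int_0^{\tau} h(t)^k\,\der t \;=\; c_k\Bigl(\frac{v^2}{a}\Bigr)^{k},\qquad c_k=\frac{2^k(k!)^2}{(2k+1)!},
\]
whereas $\bigl(2\E(t_j)/(3a)\bigr)^k=(v^2/(3a))^k=3^{-k}(v^2/a)^k$. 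These match for $k=1$ ($c_1=1/3=3^{-1}$, which is precisely lemma~\ref{lma:heur-loc}), but $c_2=2/15\neq 1/9$ and in general $c_k\neq 3^{-k}$: the hop average of $(\gamma^0)^k$ is not the $k$-th power of the hop average of $\gamma^0$. The resulting discrepancy $(c_k-3^{-k})(v^2/a)^k\tau_j$, summed over the $\sim\E(0)^{-1/2}$ hops, is exactly of order $\E(0)^k$, not $\order(\E(0)^k)$, so no refinement of your remainder bookkeeping can close the gap. (Your stated value $c_k=(2/3)^k$ is also off; already $c_1=1/3\neq 2/3$.) The paper's iteration avoids $\int h^k$ altogether, trading one factor of $\gamma^0$ for $2\E/(3\tpm{\dot\gamma})$ at a time; how exactly that induction is meant to close with the constant $(2/3)^k$ is not spelled out there either, and is worth scrutinising.
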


\begin{proof}
The case~$k=0$ is trivial.
We give the proof for $k=1$; for larger $k$ the claim follows by iterating the result.

Let $0=t_0<t_1<\dots<t_N=T$ be the zeroes of $\gamma^0$ and denote $\tau=\max_{1\leq i\leq N}(t_i-t_{i-1})$.
By lemma~\ref{lma:hoptime} and uniform convergence of $\E$ to zero (lemma~\ref{lma:unif-conv}) $\Order(\sqrt{\E(0)})$ is the same as~$\Order(\tau)$.
By lemma~\ref{lma:heur-loc} we have
\begin{equation}
\int_{t_{i-1}}^{t_i}\gamma^0(t)\der t
=
\frac{2\E(t_{i-1})}{3\tpm{\dot{\gamma}(t_{i-1})}}(t_i-t_{i-1})+\Order(\tau^4)
\end{equation}
for all $i=1,\dots,N$.
We also have $\gamma^0=\Order(\tau^2)=\Order(\E(0))$.
Thus
\begin{equation}
\begin{split}
&\int_0^T\gamma^0(t)F(t)\der t
\\&\qquad=
\sum_{i=1}^N\int_{t_{i-1}}^{t_i}\gamma^0(t)F(t)\der t
\\&\qquad=
\sum_{i=1}^N\left(\int_{t_{i-1}}^{t_i}\gamma^0(t)F(t_{i-1})\der t+\order(\tau\cdot\tau^2)\right)
\\&\qquad=
\sum_{i=1}^N\left(\frac{2\E(t_{i-1})}{3\tpm{\dot{\gamma}(t_{i-1})}}(t_i-t_{i-1})F(t_{i-1})+\order(\tau^3)\right)
\\&\qquad=
\int_0^T\frac{2\E(t)}{3\tpm{\dot{\gamma}(t)}}F(t)\der t+\order(\tau^2)
\\&\qquad=
\int_0^T\frac{2\E(t)}{3\tpm{\dot{\gamma}(t)}}F(t)\der t+\order(\E(0)),
\end{split}
\end{equation}
since the continuous function~$F$ is Riemann integrable.
\end{proof}

We define $\rho:[0,L]\to(0,\infty)$ by
\begin{equation}
\label{eq:rho-def}
\rho(t)=\exp\left(\int_0^t \frac{2A(\dot{\sigma}(s))}{3\tpm{\dot{\sigma}(s)}}\der s\right).
\end{equation}
This function obviously satisfies
\begin{equation}
\rho'(t)=\rho(t)\frac{2A(\dot{\sigma}(t))}{3\tpm{\dot{\sigma}(t)}}.
\end{equation}
Comparing with the equation $\dot{\E}=\gamma^0A(\dot{\gamma})+\Order(\gamma^0\dot{\gamma}^0)$ (cf.~\eqref{eq:dE} and~\eqref{eq:dsff1}) and the heuristics~\eqref{eq:heur}, we suspect that $\rho_n(t)\coloneqq\E_n(t)/\E_n(0)$ converges to $\rho$ as~$n\to\infty$.

To clarify further notation, we define
\begin{equation}
\phi_n(t)=\frac{2A(\dot{\gamma}_n(t))}{3\tpm{\dot{\gamma}_n(t)}}
\end{equation}
and
\begin{equation}
\rho^n(t)=\exp\left(\int_0^t \phi_n(s)\der s\right).
\end{equation}

\begin{lemma}
\label{lma:rho-conv}
As $n\to\infty$, we have $\rho_n(t)=\rho^n(t)+\order(1)=\rho(t)+\order(1)$ for all~$t\in[0,L]$.
\end{lemma}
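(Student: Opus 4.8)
The plan is to establish the two asserted asymptotics separately, exploiting that $\rho_n$, $\rho^n$, and $\rho$ all solve (exactly or approximately) the same linear ODE with the same initial value $1$, and then invoke a Gr\"onwall-type argument. First I would deal with $\rho^n(t)=\rho(t)+\order(1)$. The function $\rho^n$ satisfies $(\rho^n)'=\phi_n\rho^n$ with $\rho^n(0)=1$, while $\rho$ satisfies $\rho'=\phi\,\rho$ with $\rho(0)=1$, where $\phi(t)=2A(\dot\sigma(t))/(3\tpm{\dot\sigma(t)})$. By lemma~\ref{lma:unif-conv} we have $\dot\gamma_n\to\dot\sigma$ uniformly in $C^1$ (more precisely $\dbd\gamma_n\to\dot\sigma$ and $\dot\gamma_n^0\to0$ uniformly), and by remark~\ref{rmk:ell} the quantity $\tpm{\dot\gamma_n}$ is bounded below uniformly in $n$ and $t$; since $A$ and $\tpm{\cdot}$ are continuous, it follows that $\phi_n\to\phi$ uniformly on $[0,L]$. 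Then $\int_0^t\phi_n\to\int_0^t\phi$ uniformly, and since $\exp$ is uniformly continuous on bounded sets, $\rho^n\to\rho$ uniformly. This gives the second equality.

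Next I would handle $\rho_n(t)=\rho^n(t)+\order(1)$, which is the heart of the matter. Write $\rho_n(t)=\E_n(t)/\E_n(0)$. From~\eqref{eq:dE} and~\eqref{eq:dsff1} we have $\dot\E_n=\gamma_n^0 A(\dot\gamma_n)+\Order(\gamma_n^0\dot\gamma_n^0)$. I want to replace $\gamma_n^0$ by $\tfrac{2}{3}\E_n/\tpm{\dot\gamma_n}$ on average, which is exactly what lemma~\ref{lma:heur-glob} provides. Concretely, for any $t\in[0,L]$ that is a zero of $\gamma_n^0$ (and then, by a small perturbation, for every $t$, since $\gamma_n^0=\Order(\E_n(0))$ and the zeroes are spaced $\Order(\sqrt{\E_n(0)})$ apart by lemma~\ref{lma:hoptime}), integrating the energy identity and applying lemma~\ref{lma:heur-glob} with $F=A(\dot\gamma_n)/(\text{stuff})$ — or more carefully, after first noting $\Order(\gamma_n^0\dot\gamma_n^0)$ contributes only $\order(\E_n(0))$ because $\dot\gamma_n^0\to0$ uniformly — yields
\begin{equation}
\E_n(t)=\E_n(0)+\int_0^t\frac{2\E_n(s)}{3\tpm{\dot\gamma_n(s)}}A(\dot\gamma_n(s))\,\der s+\order(\E_n(0)).
\end{equation}
Dividing by $\E_n(0)$ gives $\rho_n(t)=1+\int_0^t\phi_n(s)\rho_n(s)\,\der s+\order(1)$, i.e.\ $\rho_n$ is an $\order(1)$-approximate solution of the same integral equation that $\rho^n$ solves exactly. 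A Gr\"onwall estimate applied to $\rho_n-\rho^n$ then forces $\rho_n-\rho^n=\order(1)$ uniformly on $[0,L]$, since $\phi_n$ is uniformly bounded. Combining with the first part finishes the proof.

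The main obstacle is the bookkeeping in applying lemma~\ref{lma:heur-glob}: that lemma is stated for an integral against a fixed continuous function $F$ on $[0,T]$, whereas here the "$F$" depends on $n$ (it is built from $\dot\gamma_n$), and $T$ should be taken as a zero of $\gamma_n^0$ close to $L$ rather than $L$ itself. I would handle this by noting that $A(\dot\gamma_n)/\tpm{\dot\gamma_n}$ converges uniformly to the fixed continuous function $A(\dot\sigma)/\tpm{\dot\sigma}$, so one may either re-run the Riemann-sum argument of lemma~\ref{lma:heur-glob} with this $n$-dependent but uniformly convergent integrand (the error terms $\Order(\tau^4)$ per subinterval, summing to $\order(\E_n(0))$, are unaffected), or split off the difference $A(\dot\gamma_n)/\tpm{\dot\gamma_n}-A(\dot\sigma)/\tpm{\dot\sigma}=\order(1)$ and apply lemma~\ref{lma:heur-glob} only to the limiting integrand. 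The discrepancy between $L$ and the nearest zero of $\gamma_n^0$ is $\Order(\sqrt{\E_n(0)})$ and contributes only $\order(\E_n(0))$ to the energy integral, so it is harmless after dividing by $\E_n(0)$. The rest is routine.
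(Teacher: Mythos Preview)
Your proposal is correct and the overall architecture---show $\rho^n\to\rho$ by uniform convergence of the integrand, then show $\rho_n-\rho^n\to0$ by comparing both to the same linear ODE and applying Gr\"onwall---matches the paper. The first half is identical to the paper's one-line argument.

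The second half, however, is organised differently. The paper never invokes lemma~\ref{lma:heur-glob}; instead it works \emph{discretely} over the zeros $0=t_n^1<t_n^2<\dots$ of $\gamma_n^0$, using lemma~\ref{lma:heur-loc} on each subinterval to obtain the one-step recursion
\[
\Delta_n(t_n^{l})=(1+\phi_n(t_n^{l-1})\tau)\Delta_n(t_n^{l-1})+\Order(\E_n(0)),\qquad \Delta_n=\rho_n-\rho^n,
\]
and then closes with a discrete Gr\"onwall argument (summing the geometric series and bounding $(1+\Phi C_1\sqrt{\E_n(0)})^{C_1L/\sqrt{\E_n(0)}}$ by $e^{C_1^2L\Phi}$). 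Your route instead integrates $\dot\E_n$ globally, feeds the resulting $\int_0^t\gamma_n^0 A(\dot\gamma_n)$ into lemma~\ref{lma:heur-glob} to land directly on the integral equation $\rho_n(t)=1+\int_0^t\phi_n\rho_n+\order(1)$, and finishes with continuous Gr\"onwall. This is slicker because it reuses machinery already built, at the cost of the bookkeeping you flag: the integrand $A(\dot\gamma_n)$ is $n$-dependent, and the upper limit need not be a zero of $\gamma_n^0$. Both issues are genuinely minor---the $A(\dot\gamma_n)$ are uniformly Lipschitz (only tangential components of $\dot\gamma_n$ enter, and these have uniformly bounded derivatives by the geodesic equation), so the Riemann-sum error in the proof of lemma~\ref{lma:heur-glob} is uniform in $n$; and the tail between $t$ and the nearest zero contributes $\Order(\E_n(0))\cdot\Order(\sqrt{\E_n(0)})=\order(\E_n(0))$ as you say. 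The paper's discrete approach sidesteps both points automatically, which is presumably why it was chosen, but your version is a legitimate and somewhat more conceptual alternative.
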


\begin{proof}
Since $\gamma_n\to\sigma$ uniformly, it is clear that $\rho^n\to\rho$ uniformly.

It suffices to show that for any $\eps>0$ there is $N\in\N$ such that
\begin{equation}
\label{eq:rhorhoeps}
\abs{\rho_n(t)-\rho^n(t)}\leq\eps
\end{equation}
for all $t\in[0,L]$. 

Let~$t_0$ and $t_0+\tau$ be two adjacent zeros of $\gamma^0$ on~$[0,L]$. 
On this interval, we have by~\eqref{eq:dE} and~\eqref{eq:dsff1}
\begin{equation}
\begin{split}
\E(t_0+\tau)-\E(t_0)
&=
\int_{t_0}^{t_0+\tau}\dot{\E}(t)\der t
\\&=
\int_{t_0}^{t_0+\tau}\gamma^0(t)[A(\dot{\gamma}(t))+\Order(\dot{\gamma}^0(t))]\der t
\\&=
\int_{t_0}^{t_0+\tau}\gamma^0(t)[A(\dot{\gamma}(t_0))+\Order(\tau)+\Order(\dot{\gamma}^0(t))]\der t.
\end{split}
\end{equation}
Since $\gamma^0=\Order(\E)$ (remark~\ref{rmk:ell}), $\dot{\gamma}^0\leq\sqrt{2\E}$, and $\tau=\Order(\sqrt{\E})$ (lemma~\ref{lma:hoptime}), lemma~\ref{lma:heur-loc} gives, after putting $\gamma=\gamma_n$ and dividing by~$\E_n(0)$,
\begin{equation}
\rho_n(t_0+\tau)-\rho_n(t_0)
=
\phi_n(t_0)\rho_n(t_0)\tau+\Order(\E_n(0)).
\end{equation}
But also (recall that $\sqrt{\E}=\Order(\tau)$ by lemma~\ref{lma:hoptime})
\begin{equation}
\rho^n(t_0+\tau)-\rho^n(t_0)
=
\phi_n(t_0)\rho^n(t_0)\tau+\Order(\E_n(0)).
\end{equation}
Defining $\Delta_n(t)=\rho_n(t)-\rho^n(t)$, these estimates give
\begin{equation}
\label{eq:D-est}
\Delta_n(t_0+\tau)=(1+\phi_n(t_0)\tau)\Delta_n(t_0)+\Order(\E_n(0)).
\end{equation}
If we can show that $\Delta_n\to0$ uniformly, we have the estimate~\eqref{eq:rhorhoeps}.

Let $0=t_n^1<t_n^2<\dots<t_n^{N_n}$ be the zeros of $\gamma_n^0$ on~$[0,L]$.
By lemma~\ref{lma:hoptime}, remark~\ref{rmk:ell} and the uniform convergence of $\E$ to zero we have $C_1^{-1}\sqrt{\E_n(0)}\leq t_n^m-t_n^{m-1}\leq C_1\sqrt{\E_n(0)}$ for some $C_1>1$.
This estimate implies that $N_n\leq C_1L/\sqrt{E_n}$.

There is obviously a constant $\Phi>0$ such that $\abs{\phi_n}\leq\Phi$.
We denote the constant associated with $\Order(\E(0))$ in the estimate~\eqref{eq:D-est} by~$C_2$.

From~\eqref{eq:D-est} we obtain
\begin{equation}
\abs{\Delta_n(t_n^l)}\leq C_2\E_n(0)(1+(1+\Phi C_1\sqrt{\E_n(0)})\abs{\Delta_n(t_n^{l-1})})
\end{equation}
for all~$l>1$.
Using~$\Delta_n(0)=0$ and induction turns this into
\begin{equation}
\begin{split}
\abs{\Delta_n(t_n^l)}
&\leq
C_2\E_n(0)\sum_{j=0}^{l-1}(1+\Phi C_1\sqrt{\E_n(0)})^j
\\&=
\frac{C_2\sqrt{\E_n(0)}}{\Phi C_1}[(1+\Phi C_1\sqrt{\E_n(0)})^l-1]
\\&\leq
\frac{C_2\sqrt{\E_n(0)}}{\Phi C_1}[(1+\Phi C_1\sqrt{\E_n(0)})^{C_1L/\sqrt{\E_n(0)}}-1].
\end{split}
\end{equation}
The estimate $(1+y/x)^{zx}<e^{yz}$ for positive~$x,y,z$ yields then
\begin{equation}
\abs{\Delta_n(t_n^l)}
\leq
\frac{C_2\sqrt{\E_n(0)}}{\Phi C_1}[\exp(C_1^2L\Phi)-1]
=
\Order(\sqrt{\E_n(0)}).
\end{equation}
This shows that $\Delta_n\to0$ uniformly and concludes the proof.
%
\end{proof}

By~\eqref{eq:dsff2} we have, in fact,
\begin{equation}
\label{eq:rhok}
\begin{split}
\rho(t)^k
&=
\exp\left(k\int_0^t \frac{2A(\dot{\sigma}(s))}{3\tpm{\dot{\sigma}(s)}}\der s\right)
\\&=
\exp\left(\frac{2k}{3}\int_0^t\Der{s}\log(\tpm{\dot{\sigma}(s)})\der s\right)
\\&=
\left(\frac{\tpm{\dot{\sigma}(t)}}{\tpm{\dot{\sigma}(0)}}\right)^{2k/3}.
\end{split}
\end{equation}

In the reconstruction we will need the Taylor polynomials of~$f$ along geodesics normal to~$\partial M$.
For $p\in\partial M$ which we think of as $p=(0,\bd{x})$, we denote
\begin{equation}
T^k_p f(x^0)=\sum_{i=0}^k \frac{(x^0)^i}{i!}\partial_\nu^if(p).
\end{equation}
If $f\in C_\nu^k$, then
\begin{equation}
\begin{split}
&f(x^0,\bd{x})-T^k_\bd{x} f(x^0)
\\&\qquad=
\idotsint\limits_{\mathclap{0\leq s_k\leq s_{k-1}\leq\dots\leq s_1\leq x^0}}
[\partial_\nu^k f(s_k,\bd{x})-\partial_\nu^k f(0,\bd{x})]\der s_k\cdots\der s_1
\\&\qquad=
\order((x^0)^k)
\end{split}
\end{equation}
and thus
\begin{equation}
\label{eq:df-est}
f(x^0,\bd{x})-T^{k-1}_\bd{x} f(x^0)
=
\frac{(x^0)^k}{k!}\partial_\nu^kf(0,\bd{x})+\order((x^0)^k).
\end{equation}

\section{Proof of theorem~\ref{thm:f-rec}}
\label{sec:pf}

Now we are ready to prove theorem~\ref{thm:f-rec}.
We will assume that $\sigma(L)\in\sisus E$; for endpoints in the closure, there is a sequence of admissible geodesics converging to the desired admissible geodesic, giving the integral~\eqref{eq:kdf-int} for all admissible~$\sigma$.

Let us begin with~$k=0$.
Define
\begin{equation}
L_n=\max\{t\in[0,L];\gamma_n^0(t)=0\}.
\end{equation}
By lemma~\ref{lma:hoptime} $L_n\to L$ as $n\to\infty$.
Since $\sigma(L)\in\sisus E$, we have $\gamma_n(L_n)\in E$ for sufficiently large $n$ (and $\gamma_n(0)\in E$ by assumption).
This is the reason for the first condition in the definition of an admissible geodesic.

The integrals
\begin{equation}
\label{eq:int-known1}
\brt{W}f(\gamma_n|_{[0,L_n]})
=
\int_0^{L_n}W_{\gamma_n}(t)f(\gamma_n(t))\der t
\end{equation}
are known by assumption, and as $n\to\infty$, this approaches the integral~\eqref{eq:kdf-int} by lemma~\ref{lma:unif-conv}.
This proves the theorem for~$k=0$.

Let now $k\geq1$.
Again, the integral~\eqref{eq:int-known1} is known.
Also, since normal derivatives of~$f$ with order strictly less than~$k$ are known on each~$\gamma_n$ for sufficiently large~$n$ by assumption, by~\eqref{eq:df-est} the integrals
\begin{equation}
\label{eq:int-known2}
\int_0^{L_n}W_{\gamma_n}(t)T^{k-1}_{\bd{\gamma}_n(t)} f(\gamma_n^0(t))\der t
\end{equation}
can be calculated.

Thus, if we show that
\begin{equation}
\label{eq:pf}
\begin{split}
&
k!\left(\frac{3\tpm{\dot{\gamma}_n(0)}^{2/3}}{2\E_n(0)}\right)^k
\\&\quad\times
\int_0^{L_n}W_{\gamma_n}(t)[f(\gamma_n(t))-T^{k-1}_{\bd{\gamma}_n(t)} f(\gamma_n^0(t))]\der t
\\&=
\int_0^LW_\sigma(t)\tpm{\dot{\sigma}(t)}^{-k/3}\partial_\nu^k f(\sigma(t))\der t+\order(1),
\end{split}
\end{equation}
we obtain the integral~\eqref{eq:kdf-int} from the given data.

By~\eqref{eq:df-est}, the estimate $\gamma_n^0=\Order(\E_n(0))$ (see remark~\ref{rmk:ell}), and lemma~\ref{lma:heur-glob} with $F(t)=W_{\gamma_n}(t)\partial_\nu^k f(\bd{\gamma}_n(t))$ we get
\begin{equation}
\begin{split}
&
k!\int_0^{L_n}W_{\gamma_n}(t)[f(\gamma_n(t))-T^{k-1}_{\bd{\gamma}_n(t)} f(\gamma_n^0(t))]\der t
\\&=
\int_0^{L_n}W_{\gamma_n}(t)[\gamma_n^0(t)^k\partial_\nu^k f(\bd{\gamma}_n(t))+\order(\gamma_n^0(t)^k)]\der t
\\&=
\int_0^{L_n}W_{\gamma_n}(t)\gamma_n^0(t)^k\partial_\nu^k f(\bd{\gamma}_n(t))\der t+\order(\E_n(0)^k)
\\&=
\int_0^{L_n}W_{\gamma_n}(t)\left(\frac{2\E_n(t)}{3\tpm{\dot{\gamma}_n(t)}}\right)^k\partial_\nu^k f(\bd{\gamma}_n(t))\der t+\order(\E_n(0)^k).
\end{split}
\end{equation}
Thus, after simplification, we have
\begin{equation}
\begin{split}
&
k!\left(\frac{3\tpm{\dot{\gamma}_n(0)}^{2/3}}{2\E_n(0)}\right)^k
\\&\quad\times
\int_0^{L_n}W_{\gamma_n}(t)[f(\gamma_n(t))-T^{k-1}_{\bd{\gamma}_n(t)} f(\gamma_n^0(t))]\der t
\\&=
\tpm{\dot{\gamma}_n(0)}^{2k/3}
\\&\quad\times
\int_0^{L_n}W_{\gamma_n}(t)\left(\frac{\rho_n(t)}{\tpm{\dot{\gamma}_n(t)}}\right)^k\partial_\nu^k f(\bd{\gamma}_n(t))\der t+\order(1).
\end{split}
\end{equation}
By lemmas~\ref{lma:rho-conv} and~\ref{lma:unif-conv} $\rho_n\to\rho$ and $(\gamma_n,\dot{\gamma}_n)\to(\sigma,\dot{\sigma})$ uniformly.
Combining these with the identity \eqref{eq:rhok} and the result $L_n\to L$ proves the estimate~\eqref{eq:pf} and concludes the proof of theorem~\ref{thm:f-rec}.

\section{One dimensional boundaries}
\label{sec:1d}

We now turn to the case when~$M$ is a compact surface with boundary, and assume the surface to have a strictly convex boundary.
By theorem~\ref{thm:f-rec} the integral~\eqref{eq:kdf-int} can be recovered for~$k=0$ if the broken ray transform of an unknown function~$f$ is known.
This information is clearly not enough to determine~$f$, but if the broken ray transform is known for several weights~$W$, reconstruction may be possible.
It is also worth mentioning that at the boundary of a strictly convex surface the first and second fundamental forms are conformally equivalent, so remark~\ref{rmk:conf} can be used for recovery of normal derivatives.

We formulate the following result to illustrate this idea.

\begin{proposition}
\label{prop:1d-lap}
Let $M$ be a $C^3$ compact surface with strictly convex boundary, let $E\subset\partial M$ be open, $k\in\Nb$, and $\lambda_0\in\R$.
Assume that the broken ray transform of a function $f\in C^k_\nu$ vanishes for all weights~$W$ with $w\equiv0$ and $a\equiv-\lambda$ for $\lambda>\lambda_0$.
Then $\partial_\nu^i f=0$ everywhere at $\partial M$ for each~$i\in\set{k}$.
\end{proposition}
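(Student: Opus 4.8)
The plan is to couple theorem~\ref{thm:f-rec} with the classical uniqueness of the Laplace transform, exploiting that $\partial M$ is one dimensional. Throughout I would use the weights given by $w\equiv1$ and $a\equiv-\lambda$, so that along any boundary geodesic~$\sigma$ the weight~\eqref{eq:W-def} is $W_\sigma(t)=e^{-\lambda t}$. Let $\Sigma$ be a connected component of~$\partial M$ that meets~$E$ (a component disjoint from~$E$ supports no admissible geodesic, so the present method sees nothing there); since the metric is~$C^3$, $\Sigma$ is a closed curve of some length~$C>0$, and strict convexity of~$\partial M$ gives $\tpm{v}>0$ for every nonzero $v\in T\Sigma$. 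Fix $p\in\Sigma\cap E$ and let $\sigma:[0,C]\to\Sigma$ be the unit speed geodesic with $\sigma(0)=\sigma(C)=p$. Since $E$ is open we have $p\in\bar E=\overline{\sisus E}$, and $\tpm{\dot\sigma}>0$ everywhere, so $\sigma$ is admissible.

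The first step is to record that the vanishing of the broken ray data forces the reconstructed integrals to vanish. In the proof of theorem~\ref{thm:f-rec} the integral~\eqref{eq:kdf-int} is the $n\to\infty$ limit of the expression on the left of~\eqref{eq:pf} (for $k=0$, simply of the integrals~\eqref{eq:int-known1}), and this expression is assembled solely from the numbers $\brt{W}f(\gamma_n|_{[0,L_n]})$, the lower order normal derivatives $\partial_\nu^i f$ with $i<k$, and the geometry. In our situation $\brt{W}f\equiv0$ on $\Gamma_E$, hence $\brt{W}f(\gamma_n|_{[0,L_n]})=0$; and once the normal derivatives of~$f$ of order $<k$ are known to vanish on~$\partial M$, the Taylor polynomials occurring in~\eqref{eq:int-known2} vanish as well. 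Therefore~\eqref{eq:kdf-int} equals~$0$. Applied with $k=0$ this gives, for our~$\sigma$,
\begin{equation}
\int_0^C e^{-\lambda t}\,f(\sigma(t))\,\der t=0\qquad\text{for all }\lambda>\lambda_0.
\end{equation}

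The second step is the one dimensional inversion. The function $t\mapsto f(\sigma(t))$ is continuous on the compact interval $[0,C]$, so $\lambda\mapsto\int_0^C e^{-\lambda t}f(\sigma(t))\,\der t$ extends to an entire function of $\lambda\in\C$; vanishing on the half line $(\lambda_0,\infty)$ it vanishes identically, and by uniqueness of the Laplace transform $t\mapsto f(\sigma(t))$ is identically zero on $[0,C]$, i.e. $f\equiv0$ on~$\Sigma$. As $\Sigma$ was an arbitrary boundary component meeting~$E$, this gives $f|_{\partial M}=0$.

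The third step is the induction of corollary~\ref{cor:f-rec}. Assume $\partial_\nu^j f\equiv0$ on~$\partial M$ for all $j<i$ with $i\le k$; this holds for $i=0$, and if $k=\infty$ one runs the induction for every $i\in\N$. Then the hypothesis of theorem~\ref{thm:f-rec} requiring $\partial_\nu^j f$, $j<i$, to be known near~$\sigma$ is satisfied, so by the first step the integral~\eqref{eq:kdf-int} with exponent~$i$ vanishes:
\begin{equation}
\int_0^C e^{-\lambda t}\,\tpm{\dot\sigma(t)}^{-i/3}\,\partial_\nu^i f(\sigma(t))\,\der t=0\qquad\text{for all }\lambda>\lambda_0.
\end{equation}
The function $t\mapsto\tpm{\dot\sigma(t)}^{-i/3}\partial_\nu^i f(\sigma(t))$ is continuous on $[0,C]$, so the argument of the second step forces it to vanish identically; since $\tpm{\dot\sigma}>0$ along~$\sigma$, we conclude $\partial_\nu^i f\equiv0$ on~$\Sigma$, hence on~$\partial M$. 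This closes the induction. I expect the only delicate point to be the first step — reading off from the proof of theorem~\ref{thm:f-rec} that zero broken ray data, together with the inductively known vanishing of the lower order normal derivatives, yields a vanishing reconstructed integral — which amounts to tracking the dependence of~\eqref{eq:pf} on the data; everything after that is the uniqueness of the Laplace transform combined with the iteration of corollary~\ref{cor:f-rec}.
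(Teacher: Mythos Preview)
Your argument is correct and follows essentially the same route as the paper: apply theorem~\ref{thm:f-rec} to an admissible boundary geodesic to obtain the vanishing of a Laplace transform on a compact interval, then invoke analyticity and injectivity of the Laplace transform, and iterate for higher normal derivatives. The only cosmetic difference is that the paper first handles~$E$ via corollary~\ref{cor:f-rec-E} and then parametrizes each connected component of $R=\partial M\setminus E$ as an arc~$[0,L]$ (using remark~\ref{rmk:conf} for the induction step), whereas you send the admissible geodesic once around the full boundary circle from a point $p\in E$ back to itself and carry the factor $\tpm{\dot\sigma}^{-i/3}$ through the Laplace argument directly; both choices feed the same uniqueness argument.
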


\begin{proof}
By corollary~\ref{cor:f-rec-E} the claim is true at~$E$.
We assume that the rest of the boundary $R=\partial M\setminus E$ is connected; if it is not, the argument may be used on each connected component separately.
We give the proof only for $k=0$; the rest follows from remark~\ref{rmk:conf} and an induction similar to the one in the proof of corollary~\ref{cor:f-rec}.

Using arc length parametrization we write $R=[0,L]$ for some~$L>0$.
(This requires that~$R$ is not only a point, but the single point case is trivial.)
We interpret the restriction~$f|_R$ as a function on $[0,L]$ and denote it by~$f$.
By theorem~\ref{thm:f-rec} and the assumption
\begin{equation}
\int_0^L e^{-\lambda s}f(s)\der s=0
\end{equation}
for each~$\lambda>\lambda_0$.

After extending $f$ by zero to $[0,\infty)$, this means that the Laplace transform~$\lap f(\lambda)$ of~$f$ vanishes for all $\lambda>\lambda_0$.
The Laplace transform of a compactly supported function is analytic, so this implies that~$\lap f(\lambda)=0$ for all~$\lambda\in\R$.
Injectivity of the Laplace transform gives the claim.
\end{proof}

In a similar vein, we study boundary recovery of the potential in a Schr\"odinger equation from the partial Dirichlet to Neumann map.
This problem can in some cases be reduced to boundary reconstruction for the broken ray transform in the following sense~\cite{KS:calderon}:
Suppose for a compactly supported continuous function $f:\R\times M\to\R$ the integral
\begin{equation}
\int_0^L\int_\R e^{-2\lambda(t+ix)}f(x,\gamma(t))\der x\der t
\end{equation}
is known for each broken ray $\gamma:[0,L]\to M$.\footnote{Starting with partial Cauchy data, one ends up with this integral by constructing solutions to the Schr\"odinger equation concentrating near a given broken ray.
The argument involves complex geometrical optics solutions based on reflected Gaussian beam quasimodes.
For a more detailed description, see~\cite{KS:calderon}.}
Can we recover~$f|_{\partial M}$?
The function~$f$ is essentially the difference of two potentials for the Schr\"odinger equation, which one attempts to observe from the Cauchy data.

Theorem~\ref{thm:f-rec} and parametrization of the reflecting part of the boundary by~$[0,L]$ as in the proof of proposition~\ref{prop:1d-lap} reduce the problem to asking whether
\begin{equation}
\int_0^L\int_\R e^{-2\lambda(t+ix)}f(x,t)\der x\der t=0
\end{equation}
for all $\lambda\in\C$ (the expression is complex analytic in~$\lambda$, so vanishing for all real values implies the same for all complex values) implies that~$f\equiv0$, where~$f$ is interpreted as a function on $\R\times[0,L]$.

Denoting $z=t+ix$ and extending~$f$ by zero to~$\C$ turns the condition into
\begin{equation}
\int_\C e^{-2\lambda z}f(z)\der\h^2(z)=0
\end{equation}
for all $\lambda\in\C$, when~$f$ is a compactly supported function in~$\C$.
Here~$\h^2$ is the two dimensional Hausdorff (or Lebesgue) measure.

To study this, we define the integral transform $I:L^1_0\to C$, where~$L^1_0$ is the space of integrable, compactly supported, measurable complex functions in the complex plane and~$C$ is the space of complex analytic functions, by setting
\begin{equation}
If(\lambda)=\int_\C e^{\lambda z}f(z)\der\h^2(z).
\end{equation}
We define scaling and translation in $L^1_0$ by $S_\mu f(z)=f(\mu z)$ for $\mu>0$ and $T_wf(z)=f(z-w)$ for~$w\in\C$.
Simple calculations show that
\begin{equation}
\label{eq:I-sc-tr}
\begin{split}
I(S_\mu f)(\lambda)&=\mu^{-2}If(\lambda/\mu)
\quad\text{and}\\
I(T_w f)(\lambda)&=e^{\lambda w}If(\lambda).
\end{split}
\end{equation}
For the convolution of two functions $f,g\in L^1_0$ (which is still in~$L^1_0$) we have
\begin{equation}
\label{eq:I-conv}
I(f*g)(\lambda)=\int_\C e^{\lambda w}f(w)Ig(\lambda)\der\h^2(w).
\end{equation}

If~$f$ is rotationally symmetric (i.e. $f(e^{i\theta}z)=f(z)$ for all $z\in\C$ and $\theta\in\R$), elementary calculations with a symmetry argument show that all derivatives of~$If$ with respect to~$\lambda$ vanish at~$\lambda=0$.
By analyticity, this implies that~$If$ is constant.
For such~$f$ thus $If(\lambda)=If(0)=\int_\C f\der\h^2$ for all~$\lambda\in\C$.

Therefore the kernel of~$I$ contains (but may not be limited to) functions rotationally symmetric with respect to any point in the plane that integrate to zero,
and convolutions of such functions against functions in~$L^1_0$.
However, $\int_\C f\der\h^2=0$ does not generally imply that~$If=0$, as the example of subtraction of characteristic functions of two squares shows:
if
\begin{equation}
f(a+bi)
=
\begin{cases}
1 & \text{if }a,b\in[0,1]\\
-1 & \text{if }a,b\in[-1,0]\\
0 & \text{otherwise},
\end{cases}
\end{equation}
we have $\int_\C f\der\h^2=0$ but~$If\neq0$.

In conclusion, some information of~$f$ is contained in~$If$, but some is also lost.
Translating this to the original partial data problem, the reconstruction method based on theorem~\ref{thm:f-rec} cannot fully detect the unknown potential, but can detect some properties of it.

\section*{Acknowledgements}
The author is partly supported by the Academy of Finland (no 250~215).
The author wishes to thank Mikko Salo for discussions regarding this article and the referee for useful feedback that has made this article an easier read.
Part of the work was done during a visit to the Institut Mittag-Leffler (Djursholm, Sweden).

\bibliographystyle{abbrv}
\bibliography{disk}

\end{document}